\newtheorem{theorem}{Theorem}
\newtheorem{defn}[theorem]{Definition}
\newtheorem{prop}[theorem]{Proposition}
\newtheorem{lemma}[theorem]{Lemma}
\theoremstyle{definition}
\newtheorem{remark}[theorem]{Remark}
\newtheorem{note}[theorem]{Note}
\newtheorem{notation}[theorem]{Notation}
\newcommand{\Singular}{\textsc{Singular}}
\DeclareMathOperator{\Mon}{Mon}
\DeclareMathOperator{\id}{id}
\DeclareMathOperator{\parm}{par}
\DeclareMathOperator{\m}{\mathfrak{m}}
\DeclareMathOperator{\jet}{jet}
\DeclareMathOperator{\corank}{corank}
\DeclareMathOperator{\supp}{supp}
\DeclareMathOperator{\Jac}{Jac}
\DeclareMathOperator{\dash}{-}
\DeclareMathOperator{\NF}{NF}
\DeclareMathOperator{\N}{\mathbb{N}}
\DeclareMathOperator{\Q}{\mathbb{Q}}
\DeclareMathOperator{\R}{\mathbb{R}}
\DeclareMathOperator{\C}{\mathbb{C}}
\DeclareMathOperator{\K}{\mathbb{K}}
\DeclareMathOperator{\coeff}{coeff}
\subjclass[2010]{
Primary 14B05; Secondary 32S25, 14Q05.
}
\title[A Classification Algorithm for Complex Singularities]%
{A Classification Algorithm for Complex Singularities of Corank and Modality up to Two}
\author{Janko B\"ohm}
\address{Janko B\"ohm\\
Department of Mathematics\\
University of Kaiserslautern\\
Erwin-Schr\"odinger-Str.\\
67663 Kaiserslautern\\
Germany}
\email{boehm@mathematik.uni-kl.de}
\author{Magdaleen S.\@ Marais}
\address{Magdaleen S.\@ Marais\\
University of Pretoria and African Institute for Mathematical Sciences\\
Department of Mathematics and Applied Mathematics\\
Private bag X20\\
Hatfield 0028\\
South Africa}
\email{magdaleen.marais@up.ac.za}
\author{Gerhard Pfister}
\address{Gerhard Pfister\\
Department of Mathematics\\
University of Kaiserslautern\\
Erwin-Schr\"odin\-ger-Str.\\
67663 Kaiserslautern\\
Germany}
\email{pfister@mathematik.uni-kl.de}
\thanks{This research was supported by the Staff Exchange Bursary Programme of the University of Pretoria.}
\keywords{%
Hypersurface singularities, algorithmic classification%
}
\begin{document}

\begin{abstract}

In \citep{AVG1985}, Arnold has obtained normal forms and has developed a classifier for, in particular, all isolated hypersurface singularities  over the complex numbers up to modality $2$. Building on a series of $105$ theorems, this classifier determines the type of the given singularity. However, for positive modality, this does not fix the right equivalence class of the singularity, since the values of the moduli parameters are not specified. 
In this paper, we present a simple classification algorithm for isolated hypersurface singularities of corank $\leq 2$ and modality $\leq2$.  For a singularity given by a polynomial over the rationals, the algorithm determines its right equivalence class by specifying a polynomial representative in Arnold's list of normal forms.

\end{abstract}

\maketitle

\section{Introduction}\label{Introduction}

In his classical paper on singularities \citep{A1974}, Arnold has classified all isolated hypersurface singularities over the complex numbers with modality $\leq 2$. He has given normal forms in the sense of polynomial families with moduli parameters such that every stable equivalence class of function germs contains at least one (but only finitely many) elements of these families.  We refer to such elements as normal form equations. Two germs are stably equivalent if they are right equivalent after the direct addition of a non-degenerate quadratic form. Two function germs $f,g\in\m^2\subset\C[[x_1,\ldots,x_n]]$, where $\m=\langle x_1,\ldots,x_n\rangle$, are right equivalent, written $f\sim g$, if there is a $\C$-algebra automorphism $\phi$ of $\C[[x_1,\ldots,x_n]]$ such that $\phi(f)=g$. Using the Splitting Lemma, any germ with an isolated singularity at the origin can be written, after choosing a suitable coordinate system, as the sum of two functions on disjoint sets of variables. One function that is called the non-degenerate part, is a non-degenerate quadratic form, and the other part, called the residual part is in $\m^3$. The Splitting Lemma is implemented in \Singular\ as part of the library \texttt{classify.lib} \citep{classify}. 

In \citep{AVG1985}, Arnold has made this classification explicit by describing an algorithmic classifier, which is based on a series of $105$ theorems. This approach determines the type of the singularity in the sense of its normal form. However, the values of the moduli parameters are not determined, that is, no normal form equation is given. Arnold's classifier is implemented in \texttt{classify.lib}.

Classification of complex singularities has a multitude of practical and theoretical applications. The classification of real singularities in \citep{realclassify1,realclassify2,realclassify3} is based on determining the complex type of the singularity.

In this paper, we develop a determinator for complex singularities of modality $\leq 2$ and corank $\leq 2$, which computes, for a given rational input polynomial, a normal form equation in its equivalence class. For singularities with non-degenerate Newton boundary, our determinator is based on a simple and uniform approach, which does not require a case-by-case analysis (except for some trivial final steps to read off the values of the moduli parameters according to Arnold's choice of the normal form). Two series of cases with degenerate Newton boundary are handled with more specific methods. Here, we use results of \citep{LP} to compute a normal form.  In this way, we obtain an approach which does not only determine the moduli parameters, but also allows for an elegant implementation. We have implemented our algorithm in the \Singular-library \texttt{classify2.lib} \citep{classify2}.

It is important to note that two different normal form equations do not necessarily represent two different right equivalence classes. In \citep{realclassify2} the complete structure of the equivalence classes for, in particular, complex singularities of modality $1$ and corank $2$ is determined, in the sense that all equivalences between normal form equations are described. All normal form equations in the right equivalence class of a given unimodal corank $2$ singularity can, hence, be determined by combining our classifier with the results in \citep{realclassify2}. There is not yet a similar complete description of the structure of the equivalence classes of bimodal singularities.

This paper is structured as follows: In Section \ref{Definitions and Preliminary Results}, we give the fundamental definitions and provide the prerequisites on singularities and their classification. In Section \ref{section:generalAlg}, we develop a general algorithm for the classification of complex singularities of modality $\leq 2$ and corank $\leq 2$. Essentially, the algorithm is structured into a subalgorithm for elimination below the Newton polygon, and a subalgorithm for elimination on and above the Newton polygon, which also determines the values of the moduli parameters. The algorithm for the two series of germs of modality $2$ with degenerate Newton boundary is discussed in Section \ref{section:DegenerateAlg}.

\section{Definitions and Preliminary Results}\label{Definitions and Preliminary Results}
In this section, we give some basic definitions and results, as well as some notation that will be used throughout the paper.

\begin{defn}\label{def nfequ}
Let $K\subset \C[[x_1,\ldots, x_n]]$ be a union of equivalence classes with respect to the relation ${\sim}$. A \textbf{normal form} for $K$ is given by a smooth map
\[\Phi:B\longrightarrow \C[x_1,\ldots,x_n]\subset\C[[x_1,\ldots,x_n]]\]
of a finite-dimensional $\C$-linear space $B$ into the space of polynomials for which the following three conditions hold:
\begin{itemize}[leftmargin=10mm]
\item[(1)] $\Phi(B)$ intersects all equivalence classes of $K$,
\item[(2)] the inverse image in $B$ of each equivalence class is finite,
\item[(3)] $\Phi^{-1}(\Phi(B)\setminus K)$ is contained in a proper hypersurface in $B$.
\end{itemize}
The elements of the image of $\Phi$ are called \textbf{normal form equations}.
\end{defn}

\begin{remark}
Arnold has chosen a normal form for each of the corank $2$ singularities of modality $\leq 2$. He has also associated a type to each normal form, see Table \ref{tab:normal_forms}. We denote the normal form corresponding to the type $T$ by $\NF(T)$. 
For $b\in \parm(\NF(T)):=\Phi^{-1}(K)$ with $K$ as in Definition~\ref{def nfequ}, we write $\NF(T)(b):=\Phi(b)$ for the corresponding normal form equation.
\end{remark}

\begin{table}[h]
\centering
\caption{Normal forms of singularities of modality~$\leq 2$ and corank~$\leq 2$ as given
in \citet{AVG1985}}
\label{tab:normal_forms}
\renewcommand{\thempfootnote}{\fnsymbol{mpfootnote}}
\addtocounter{mpfootnote}{1}
\newcommand{\setfnA}{\footnote{\label{fnA}%
Note that the restriction $a^2 \neq 4$ applies to the normal forms of the real
subtypes $X_9^{++}$, $X_9^{--}$, and $J_{10}^+$ as well as to the normal forms
of the complex types $X_9$ and $J_{10}$ while the restriction $a^2 \neq -4$
applies to the normal forms of the real subtypes $X_9^{+-}$, $X_9^{-+}$, and
$J_{10}^-$ if we allow complex parameters.}}
\newcommand{\reffnA}{\textsuperscript{\ref*{fnA}}}
\centering
\scalebox{0.89}{
\begin{tabular}{|c|c|c|c||c|c|c|c|}
\hline

\multicolumn{1}{|c}{}
 & & Complex         & \multirow{2}{*}{Restrictions} &\multicolumn{1}{|c}{}
 & &Complex         & \multirow{2}{*}{Restrictions} \\[-0.5ex]
\multicolumn{1}{|c}{}
 & & normal form &&\multicolumn{1}{|c}{}
 & & normal form &                              \\
\hline\hline

\multirow{7}{*}{\begin{sideways}Simple\end{sideways}}

& {$A_k$} & {$x^{k+1}$}
  &{$k\ge 1$}&\multirow{16}{*}{\begin{sideways}Bimodal\end{sideways}}&$J_{3,0}$&$x^3+bx^2y^3+y^9+cxy^7$&$4b^3+27\neq 0$
\\ \cline{2-4} \cline{6-8}

& {$D_k$} & {$x^2y+y^{k-1}$}
 &  $k\ge 4$&&$J_{3,p}$&$x^3+x^2y^3+{\bf a}y^{9+p}$&$p>0$, $a_0\neq0$ \\\cline{2-4}\cline{6-8}

& {$E_6$} & {$x^3+y^4$}
  &{-}&&$Z_{1,0}$&$x^3y+dx^2y^3+cxy^6+y^7$&$4d^3+27\neq 0$
\\ \cline{2-4}\cline{6-8}

& {$E_7$} & {$x^3+xy^3$}
  &{-}&&$Z_{1,p}$&$x^3y+x^2y^3+{\bf a}y^{7+p}$&$p>0$, $a_0\neq 0$
\\ \cline{2-4}\cline{6-8}

& {$E_8$} & {$x^3+y^5$}
  &{-}&&$W_{1,0}$&$x^4+{\bf a}x^2y^3+y^6$&$a_0^2\neq 4$\\ \cline{2-4}\cline{6-8}

& {$X_9$} & {$x^4+ax^2y^2+y^4$}
  &{$a^2\neq4$}&&$W_{1,p}$&$x^4+x^2y^3+{\bf a}y^{6+p}$&$p>0$, $a_0\neq 0$
\\ \cline{2-4}\cline{6-8}

& {$J_{10}$} & {$x^3+ax^2y^2+y^6$}
 &  $4a^3+27 \neq 0$&&$W_{1,2q-1}^\sharp$&$(x^2+y^3)^2+{\bf a}xy^{4+q}$&$q>0$, $a_0\neq 0$ \\ \cline{1-4}\cline{6-8}

\multirow{11}{*}{\begin{sideways}Unimodal\end{sideways}}

& {$J_{10+k}$} & {$x^3+x^2y^2+ay^{6+k}$}
  & {$a \neq 0,\; k > 0$} &&$W_{1,2q}^\sharp$&$(x^2+y^3)^2+{\bf a}x^2y^{3+q}$&$q>0$, $a_0\neq 0$ \\ \cline{2-4}\cline{6-8}

& {$X_{9+k}$} &{$x^4+x^2y^2+ay^{4+k}$}
  
      & {$a \neq 0,\; k > 0$}&&$E_{18}$&$x^3+y^{10}+{\bf a}xy^7$&-  \\
 \cline{2-4}\cline{6-8}

&{$Y_{r,s}$} &{$x^r+ax^2y^2+y^s$}
 &{$a \neq 0,\; r,s > 4$}&&$E_{19}$&$x^3+xy^{7}+{\bf a}y^{11}$&- \\ 
\cline{2-4}\cline{6-8}

& $E_{12}$ & $x^3+y^7+axy^5$ & -&&$E_{20}$&$x^3+y^{11}+{\bf a}xy^8$&- \\ \cline{2-4}\cline{6-8}

& $E_{13}$ & $x^3+xy^5+ay^8$ &  - &&$Z_{17}$&$x^3y+y^8+{\bf a}xy^6$&-\\ \cline{2-4}\cline{6-8}

&$E_{14}$ & {$x^3+y^8+axy^6$}
   &{-}&&$Z_{18}$&$x^3y+xy^6+{\bf a}y^9$&-\\ \cline{2-4}\cline{6-8}

& $Z_{11}$ & $x^3y+y^5+axy^4$ & -&&$Z_{19}$&$x^3y+y^9+{\bf a}xy^7$&- \\ \cline{2-4}\cline{6-8}

& $Z_{12}$ & $x^3y+xy^4+ax^2y^3$& -&&$W_{17}$&$x^4+xy^5+{\bf a}y^7$&- \\ \cline{2-4}\cline{6-8}

& {$Z_{13}$} & {$x^3y+y^6+axy^5$}
  & {-} &&$W_{18}$&$x^4+y^7+{\bf a}x^2y^4$&-\\ \cline{2-4}\cline{6-8}

&{$W_{12}$} &{$x^4+y^5+ax^2y^3$}
  &{-} &&&&\\ \cline{2-4}

&{$W_{13}$} & {$x^4+xy^4+ay^6$}
  &{-} &&&where $\mathbf{a}=a_0+a_1 y$&
    \\ \hline

\end{tabular}
}
\end{table}

In the following, we give a short account on weighted jets, filtrations, and Newton polygons. See
\citep{A1974} and \citep{PdJ2000} for more details.

\begin{defn}
Let $w=(c_1,\ldots,c_n)\in\N^n$ be a weight on the variables $(x_1,\ldots,x_n)$. The $w$-weighted degree on $\Mon(x_1,\ldots,x_n)$ is given by $w\dash\deg(\prod_{i=1}^nx_i^{s_i}):=\sum_{i=1}^n c_i s_i$. If the weight of all variables is equal to $1$, we refer to the weighted degree of a monomial $m$ as the standard degree of $m$ and write $\deg(m)$ for $w\dash\deg(m)$. We use the same notation for terms of polynomials.

We call a polynomial $f\in \C[x_1,\ldots, x_n]$ \textbf{quasihomogeneous} or weighted homogeneous of degree $d$ with respect to the weight $w$
if $w\dash\deg(t)=d$ for any term $t$ of $f$.
\end{defn}

\begin{defn}\label{def:piecewiseWeight}
Let $w=(w_1,\ldots,w_s)\in(\N^n)^s$ be a finite family of weights on the variables $(x_1,\ldots,x_n)$. For any monomial (or term) $m\in \C[x_1,\ldots,x_n]$,  we define the \textbf{piecewise weight} with respect to $w$ as
\begin{eqnarray*}
w\dash\deg(m)&:=&\min_{i=1,\ldots,s}w_i\dash\deg(m).\\
\end{eqnarray*}
A polynomial $f$ is called  \textbf{piecewise homogeneous} of degree $d$ with respect to $w$ if $w\dash\deg(t)=d$ for any term $t$ of $f$. 
\end{defn}
\pagebreak[3]

\begin{defn}
Let $w$ be a (piecewise) weight on $\Mon(x_1,\ldots,x_n)$.
\begin{enumerate}[leftmargin=10mm]
\item
Let $f = \sum_{i = 0}^{\infty} f_{i}$ be the decomposition of
$f \in \C[[x_1,\ldots,x_n]]$ into weighted homogeneous summands $f_{i}$ of
$w$-degree $i$. The \textbf{weighted $j$-jet} of $f$ with respect to $w$ is
\[
w \dash \jet(f, j) := \sum_{i = 0}^j f_{i} \,.
\]
The sum of terms of  $f$ of lowest $w$-degree is the \textbf{principal part} of $f$ with respect to $w$.

\item A power series in $\C[[x_1,\ldots,x_n]]$ has \textbf{filtration} $d \in \N$ with respect to $w$ if all its
monomials are of $w$-weighted degree $d$ or higher. The power series of filtration
$d$ form a sub-vector space
\[
E_d^w \subset \C[[x_1,\ldots,x_n]] \,.
\]

\item A power series $f\in\C[[x_1,\ldots,x_n]]$ is \textbf{weighted $k$-determined} with respect to the weight $w$ if
\[ f\sim w\dash\jet(f,k)+g\qquad\text{for all } g\in E_{k+1}^w.\]
We define the \textbf{weighted determinacy} of $f$ as the minimum number $k$ such that $f$ is $k$-determined.
\end{enumerate}
\end{defn}

\begin{defn}
Let $w\in \mathbb{N}^n$ be a single weight.  A power series $f\in \C[[x_1,\ldots,x_n]]$ is called \textbf{semi-quasihomogeneous} with respect to $w$ if its principal part with respect to $w$ is non-degenerate, that is, has finite Milnor number.\footnote{We say that $f$ is (semi-)quasihomogeneous if there exists a weight $w$ such that $f$ is (semi-)quasihomogeneous with respect to $w$.} The principal part is then called the \textbf{quasihomogeneous part} of $f$.

\end{defn}

\begin{notation}
\begin{enumerate}[leftmargin=10mm]
\item If the weight
of each variable is $1$, we write $E_d$ and $\jet(f,j)$ instead of  $E_d^w$ and $w\dash\jet(f,j)$, respectively. 
\item If for a given type $T$, $w\dash\jet(\NF(T)(b),j)$ is independent of $b\in\parm(\NF(T))$, we denote it by $w\dash\jet(T,j)$. 
\end{enumerate}
\end{notation}

There are similar concepts of jets and filtrations for coordinate transformations:

\begin{defn}\label{phi}
Let $\phi$ be a $\C$-algebra automorphism of $\C[[x_1,\ldots,x_n]]$ and let
$w$ be a weight on $\Mon(x_1,\ldots,x_n)$.

\begin{enumerate}[leftmargin=10mm]
\item
For $j > 0$ we define \emph{$w\dash\jet(\phi,j):=\phi_j^w$} as the automorphism given by
\[
\phi_j^w(x_i) := w\dash\jet(\phi(x_i),w\dash\deg(x_i)+j) \quad
\text{for all }i = 1,\ldots,n \,.
\]
If the weight of each variable is equal to $1$, that is, $w = (1, \ldots, 1)$, we
write $\phi_j$ for $\phi_j^w$.

\item\label{enum:filtration}
$\phi$ has filtration $d$ if, for all $\lambda \in \N$,
\[
(\phi-\id)E_\lambda^w \subset E_{\lambda+d}^w \,.
\]
\end{enumerate}
\end{defn}

\begin{remark}
Note that $\phi_0(x_i) = \jet(\phi(x_i), 1)$ for all $i = 1, \ldots, n$.
Furthermore note that $\phi_0^w$ has filtration $\le 0$, and
that, for $j > 0$, $\phi_j^w$ has filtration $j$ if $\phi_{j-1}^w = \id$.
\end{remark}

The following definition gives an infinitesimal analogue of the above definition.
\begin{defn}
A formal vector field ${\bf v}=\sum_i v_i\frac{\partial}{\partial x_i}$ has filtration $d$ with respect to a weight $w$, if the directional derivative of ${\bf v}$ raises the filtration by not less than $d$, that is,
\[\text{for all }g\in E^w_{\delta}, \quad L_{\bf v}(g):=\sum_i v_i\frac{\partial g}{\partial x_i}\in E^w_{\delta+d}.\]
\end{defn}

In a similar way as \cite[Proposition 8]{realclassify1}, one can prove:

\begin{prop}\label{wfact}
Let $f,g \in \C[[x_1,\ldots,x_n]]$ be two power series with $f \sim g$. Let $w\in\N^n$ and suppose that the maximal weighted filtration of $f$ with respect to $w$ is $k$. Furthermore, let $\phi$ be a $\C$-algebra automorphism of
$\C[[x_1,\ldots,x_n]]$ such that $\phi(f)=g$.
If $\jet(f,k)$ factorizes as
\[
w\dash\jet(f,k) = f_1^{s_1} \cdots f_t^{s_t}
\]
in $\C[x_1,\ldots,x_n]$, then $w\dash\jet(g,k)$ factorizes as
\[
w\dash\jet(g,k) = \phi_0^w(f_1)^{s_1} \cdots \phi_0^w(f_t)^{s_t} \,.
\]
\end{prop}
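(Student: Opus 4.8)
The plan is to isolate a single identity and then let algebraic formalities do the rest. Since $\phi_0^w$ is, by construction, the $\C$-algebra homomorphism of $\C[x_1,\dots,x_n]$ determined by $x_i\mapsto w\dash\jet(\phi(x_i),w\dash\deg(x_i))$, it is multiplicative; hence from
\[
w\dash\jet(f,k)=f_1^{s_1}\cdots f_t^{s_t}
\]
we obtain $\phi_0^w\bigl(w\dash\jet(f,k)\bigr)=\phi_0^w(f_1)^{s_1}\cdots\phi_0^w(f_t)^{s_t}$ with no further work. The whole proposition therefore reduces to the single identity
\[
w\dash\jet(g,k)=\phi_0^w\bigl(w\dash\jet(f,k)\bigr).
\]

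To prove this, I would first use the hypothesis that the maximal weighted filtration of $f$ is $k$: this says $f\in E_k^w$, so $p:=w\dash\jet(f,k)$ is the weighted-homogeneous principal part of degree $k$ and $f=p+h$ with $h\in E_{k+1}^w$. Next I would split the automorphism into its leading part and a tail, writing $\phi(x_i)=\phi_0^w(x_i)+r_i$, where by definition $r_i\in E_{w_i+1}^w$. The proof then rests on two comparisons: (I) that passing from $\phi$ to $\phi_0^w$ leaves the weighted $k$-jet of $\phi(f)$ unchanged, that is, $w\dash\jet(\phi(f),k)=w\dash\jet(\phi_0^w(f),k)$; and (II) that only the principal part of $f$ survives in weighted degree $\le k$ after applying $\phi_0^w$, that is, $w\dash\jet(\phi_0^w(f),k)=\phi_0^w(p)$. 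Chaining these with $g=\phi(f)$ yields the required identity.

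Both comparisons are a matter of tracking weighted degrees through a multinomial expansion. For a monomial $m$ of weighted degree $d$, expanding $\phi(m)=\prod_i(\phi_0^w(x_i)+r_i)^{a_i}$ and subtracting $\phi_0^w(m)=\prod_i\phi_0^w(x_i)^{a_i}$ leaves only terms containing at least one tail factor $r_i$; since each such factor lies in $E_{w_i+1}^w$ while $\phi_0^w(x_i)$ carries weighted degree $w_i$, every surviving term has weighted degree $\ge d+1$, so that $\phi(m)-\phi_0^w(m)\in E_{d+1}^w$. Summing over the terms of $p$ (with $d=k$) gives (I) on the principal part, and the same estimate applied to $h$, whose monomials have $d\ge k+1$, shows that both $\phi(h)$ and $\phi_0^w(h)$ lie in $E_{k+1}^w$ and hence do not contribute to the $k$-jet; this simultaneously yields (II), since $\phi_0^w$ respects the weighted grading and therefore sends $p$ into weighted degree exactly $k$.

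The main obstacle is precisely this degree bookkeeping, and it is the step where the structure of $\phi_0^w$ recorded in the remark following Definition~\ref{phi} is indispensable: one needs that $\phi_0^w(x_i)$ is weighted-homogeneous of degree $w_i$, so that $\phi_0^w$ preserves the weighted grading, and that the higher corrections $r_i$ genuinely raise the filtration. Controlling all of these contributions uniformly, rather than case by case, is the only delicate point; it is carried out exactly as in the proof of \cite[Proposition 8]{realclassify1}, after which the multiplicativity of $\phi_0^w$ completes the argument.
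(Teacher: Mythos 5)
Your overall strategy---reducing everything to the single identity $w\dash\jet(g,k)=\phi_0^w\bigl(w\dash\jet(f,k)\bigr)$ and then invoking multiplicativity of $\phi_0^w$---is exactly the argument the paper has in mind (the paper gives no proof, only the remark that one argues as for \cite[Proposition 8]{realclassify1}, which is the standard-grading case). However, there is a genuine gap at the step you yourself single out as indispensable: the claim that $\phi_0^w(x_i)$ is weighted-homogeneous of degree $w_i:=w\dash\deg(x_i)$ is \emph{not} true ``by construction''. By definition, $\phi_0^w(x_i)=w\dash\jet(\phi(x_i),w_i)$ collects \emph{all} terms of $\phi(x_i)$ of weighted degree $\le w_i$, and when the weights are not all equal this may contain terms of weighted degree strictly smaller than $w_i$ (the remark after Definition~\ref{phi} accordingly asserts only that $\phi_0^w$ has filtration $\le 0$). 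Once some $\phi_0^w(x_i)$ has weighted order $<w_i$, both of your comparisons fail: in (I) a term containing one tail factor $r_i$ need not have weighted degree $\ge d+1$, and in (II) $\phi_0^w(h)$ for $h\in E_{k+1}^w$ need not remain in $E_{k+1}^w$, so the higher-order part of $f$ can contaminate $w\dash\jet(g,k)$.

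This is not something more careful bookkeeping can repair: under the hypotheses as literally stated the proposition is false. Take $n=2$, $w=(2,1)$, $f=x^2+x^3$ (maximal weighted filtration $k=4$, $w\dash\jet(f,4)=x^2$) and the automorphism $\phi(x)=x+y$, $\phi(y)=y$. Then $\phi_0^w(x)=x+y$, so the asserted factorization is $(x+y)^2$, whereas $w\dash\jet(\phi(f),4)=x^2+2xy+y^2+3xy^2+y^3$, the extra terms coming from $\phi(x^3)$. What is missing is the hypothesis that $\phi$ has filtration $\ge 0$ with respect to $w$ (equivalently, that each $\phi(x_i)$ lies in $E_{w_i}^w$); this holds automatically for the standard grading, which is why the argument of \cite[Proposition 8]{realclassify1} goes through there, and it is exactly the hypothesis under which the paper later applies the statement (``for any automorphism $\phi$ of filtration $\ge 0$''). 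With that hypothesis added, $\phi_0^w(x_i)$ really is weighted-homogeneous of degree $w_i$ and each $r_i\in E_{w_i+1}^w$, your degree estimates in (I) and (II) are correct, and the multiplicativity step finishes the proof; you should state the filtration hypothesis explicitly rather than derive it from the definition of $\phi_0^w$.
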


 \begin{defn}
 Let $f=\sum_{i,j}a_{i,j}x^{i}y^{j}\in\C[[x,y]]$, let $T$ be a corank $2$ singularity type. We call
 \begin{eqnarray*}
 \supp(f)&:=&\{x^{i}y^{j}\ |\ a_{i,j}\neq 0\}\\
 \supp(T)&:=&\supp(\NF(T)(b))
 \end{eqnarray*}
where $b\in\parm(\NF(T))$ is generic, the \textbf{support} of $f$ and of $T$, respectively. Let
\begin{eqnarray*}
\Gamma_+(f)&:=&\displaystyle{\bigcup_{x^{i} y^{j}\in\supp(f)}}((i,j)+\R^2_+)\\
\Gamma_+(T)&:=&\displaystyle{\bigcup_{x^{i} y^{j}\in\supp(T)}}((i,j)+\R^2_+)
\end{eqnarray*}
and let $\Gamma(f)$ and $\Gamma(T)$ be the boundaries in $\R^2$ of the convex hulls of $\Gamma_+(f)$ and $\Gamma_+(T)$, respectively. Then:
\begin{enumerate}[leftmargin=10mm]
\item $\Gamma(f)$ and $\Gamma(T)$ are called the \textbf{Newton polygons} of $f$ and $T$, respectively.
\item The compact segments of $\Gamma(f)$ or $\Gamma(T)$ are called \textbf{faces}. If $\Delta$ is a face, then the set of monomials of $f$ lying on $\Delta$ is denoted by $\supp(f,\Delta)$ and the sum of the terms lying on $\Delta$ by $\jet(f,\Delta)$. 
Moreover, we write $\supp(\Delta)$ for the set of monomials corresponding to the lattice points of $\Delta$, and set $\supp(T,\Delta):=\supp(T)\cap\supp(\Delta)$.  
We use the same notation for a set of faces, considering the monomials lying on the union of the faces.
\item Any face $\Delta$ induces a weight $w(\Delta)$ on $\Mon(x,y)$ in the following way: If $\Delta$ has slope $-\frac{w_x}{w_y}$, in lowest terms, and $w_x,w_y>0$, we set $w(\Delta)\dash\deg(x)=w_x$ and $w(\Delta)\dash\deg(y)=w_y$. 
 \item If $w_1,\ldots,w_s$ are the weights associated to the faces of $\Gamma (f)$, respectively $\Gamma(T)$, ordered by increasing slope, there are unique minimal integers $\lambda_1,\ldots,\lambda_s\geq 1$ such that the piecewise weight associated to $(\lambda_1 w_1,\ldots,\lambda_s w_s)$ by Definition \ref{def:piecewiseWeight} is constant on $\Gamma(f)$, respectively $\Gamma(T)$.  We denote this piecewise weight by $w(f)$, respectively $w(T)$, and the corresponding constant by $d(f)$, respectively $d(T)$.
\item Let $\Delta_i$ and $\Delta_j$ be faces with weights $w_1$ and $w_2$, respectively, and let $w$ be the piecewise weight defined by $w_1$ and $w_2$. Let $d$ be the $w$-degree of the monomials on $\Delta_1$ and $\Delta_2$. Then $\operatorname{span}(\Delta_1,\Delta_2)$ is the Newton polygon associated to the sum of all monomials of $w$-degree $d$.
\item A monomial $m$ lies strictly underneath, on or above $\Gamma(f)$, if the $w(f)$-degree of $m$ is less than, equal to or greater than $d(f)$, respectively. We use this notation also with respect to $\Gamma(T)$, $w(T)$, and $d(T)$.
 \end{enumerate}
 \end{defn}

 \begin{notation}
Given  $f\in\mathbb{C}[[x_1,\ldots, x_n]]$ and $m\in \Mon(x_1,\ldots, x_n)$, we write $\coeff(f,m)$ for the coefficient of $m$ in $f$. 
\end{notation}

 \begin{defn}
 The \textbf{Jacobian ideal} $\Jac(f)\subset \C[[x_1,\ldots x_n]]$ of $f$ is generated by the partial derivatives of $f\in\mathbb{C}[[x_1,\ldots x_n]]$. The \textbf{local algebra} of $f$ is the residue class ring of the Jacobian ideal of $f$.
 \end{defn}

\begin{defn}
Suppose $f$ is a non-degenerate germ, $e_1,\ldots,e_\mu$ are monomials representing a basis of the local algebra of $f$, and $e_1,\ldots,e_s$ are the monomials in this basis above or on $\Gamma(f)$. We then call $e_1,\ldots,e_s$ a \textbf{system} of the local algebra of $f$.
\end{defn}

\begin{lemma}[\citet{A1974}, Corollary 3.3]
Let $f$ be a semi-quasihomogeneous function with quasihomogeneous part $f_0$, and let $e_1,\ldots,e_\mu$ be monomials representing a basis of the local algebra of $f_0$. Then $e_1,\ldots,e_\mu$ also represent a basis of the local algebra of $f$.
\end{lemma}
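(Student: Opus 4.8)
The plan is to show that the two local algebras have the same dimension (namely $\mu$) and that the images of $e_1,\ldots,e_\mu$ remain linearly independent in the local algebra of $f$; combined, these two facts force them to be a basis. The semi-quasihomogeneous hypothesis is exactly what makes both of these work, via a filtration/flatness argument on the Jacobian ideal.

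\emph{Setup and the key structural observation.} First I would fix the weight $w$ with respect to which $f$ is semi-quasihomogeneous, so that $f = f_0 + g$ with $f_0$ the quasihomogeneous principal part (of some $w$-degree $d$) and $g \in E_{d+1}^w$, i.e.\ every term of $g$ has strictly higher $w$-degree than $d$. The crucial point is how this filtration interacts with differentiation: each partial derivative operator $\partial/\partial x_i$ lowers $w$-degree by exactly $w_i := w\dash\deg(x_i)$, so $\partial f/\partial x_i = \partial f_0/\partial x_i + \partial g/\partial x_i$ where $\partial f_0/\partial x_i$ is quasihomogeneous of $w$-degree $d - w_i$ and $\partial g/\partial x_i \in E_{d-w_i+1}^w$. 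Thus the generators of $\Jac(f)$ are ``leading-term perturbations'' of the generators of $\Jac(f_0)$, with the perturbation strictly raising the filtration. Non-degeneracy of $f_0$ gives that $\Jac(f_0)$ is $\m$-primary with $\dim_\C \C[[x]]/\Jac(f_0) = \mu < \infty$, and the same finiteness must be transferred to $f$.

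\emph{The two halves of the argument.} First I would show $\dim_\C \C[[x]]/\Jac(f) = \mu$ as well. Because $\Jac(f_0)$ is $\m$-primary, there is an $N$ with $\m^N \subset \Jac(f_0)$; using that the $w$-degree of monomials in $\m^k$ tends to infinity with $k$, the filtered/leading-term relationship above lets me show $\m^{N'} \subset \Jac(f)$ for suitable $N'$, so $\Jac(f)$ is $\m$-primary and the quotient is again finite-dimensional. To pin the dimension at exactly $\mu$, the clean route is the associated graded argument: with respect to the $w$-filtration, the initial ideal $\operatorname{in}_w \Jac(f)$ contains $\operatorname{in}_w \Jac(f_0) = \Jac(f_0)$ (since the leading forms of the perturbed generators are precisely the generators of $\Jac(f_0)$), and a dimension count on the graded pieces shows no extra leading terms can appear, giving $\operatorname{in}_w \Jac(f) = \Jac(f_0)$ and hence equality of colengths, $\dim_\C \C[[x]]/\Jac(f) = \dim_\C \C[[x]]/\Jac(f_0) = \mu$. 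Second, I would show $e_1,\ldots,e_\mu$ are linearly independent modulo $\Jac(f)$: a nontrivial relation $\sum c_i e_i \in \Jac(f)$ would, after passing to lowest-$w$-degree parts, produce a nontrivial relation among the corresponding $e_i$ modulo $\operatorname{in}_w\Jac(f) = \Jac(f_0)$, contradicting that they form a basis for $\C[[x]]/\Jac(f_0)$.

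\emph{Conclusion.} Since $e_1,\ldots,e_\mu$ are $\mu$ linearly independent elements in the $\mu$-dimensional space $\C[[x]]/\Jac(f)$, they form a basis, which is the claim. \textbf{The main obstacle} I expect is the second bullet of the first half: controlling the initial ideal to get the dimension \emph{exactly} equal to $\mu$ rather than merely finite. The inclusion $\operatorname{in}_w\Jac(f) \supseteq \Jac(f_0)$ is immediate from comparing leading forms, but ruling out strictly larger initial ideal (which would drop the colength below $\mu$) requires the filtration-preserving flatness/semicontinuity statement for the family interpolating between $f_0$ and $f$ — essentially the content of Arnold's semi-quasihomogeneous deformation theory, so I would either invoke that machinery or argue directly that a drop in colength along the filtration is incompatible with $f_0$ being non-degenerate.
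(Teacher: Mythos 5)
The paper does not prove this statement at all: it is quoted verbatim as \citet[Corollary 3.3]{A1974} and used as a black box, so there is no in-paper argument to compare against. Judged on its own, your skeleton is the standard one (pass to the weight filtration, show $\operatorname{in}_w\Jac(f)=\Jac(f_0)$, deduce both that the colength is $\mu$ and that the $e_i$ stay independent), and the easy inclusion $\operatorname{in}_w\Jac(f)\supseteq\Jac(f_0)$ is correctly identified.

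The genuine gap is exactly where you flag it, but your proposed repairs do not close it. The sentence ``a dimension count on the graded pieces shows no extra leading terms can appear'' is not an argument: a strictly larger initial ideal is a priori possible and would simply give colength $<\mu$, and nothing in a dimension count forbids that. Worse, the tool you name, semicontinuity of the Milnor number along the family $f_t=t^{-d}f(t^{w_1}x_1,\ldots,t^{w_n}x_n)$, yields $\mu(f)\le\mu(f_0)$ --- the \emph{easy} direction, already delivered by the spanning/reduction argument --- and says nothing about ruling out a drop. The actual content of the lemma is the reverse inequality, and the standard way to get it is: since $f_0$ has an isolated singularity, $\partial f_0/\partial x_1,\ldots,\partial f_0/\partial x_n$ form a regular sequence, so every syzygy among them is Koszul; hence if the initial forms of the $a_i$ in an expression $\sum_i a_i\,\partial f/\partial x_i$ cancel, one can correct the $a_i$ by a Koszul syzygy to strictly raise their filtration, and iterating (with convergence in the $\m$-adic topology) shows that the initial form of every element of $\Jac(f)$ already lies in $\Jac(f_0)$. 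Without this regular-sequence/Koszul step --- which is precisely where the non-degeneracy of $f_0$ enters --- both halves of your argument (the colength computation and the linear independence) remain unproved, since both rest on the unestablished equality $\operatorname{in}_w\Jac(f)=\Jac(f_0)$.
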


 \begin{theorem}[\citet{A1974}, Theorem 7.2]\label{thm:quasiNF}
 Let $f$ be a semi-quasihomogeneous function with quasihomogeneous part $f_0$ 
 and let $e_1,\ldots,e_s$ be a system of the local algebra of $f$.
 Then $f$ is equivalent to a function of the form $f_0+\sum_{k=1}^{s} c_ke_k$ with constants $c_k$. 
 \end{theorem}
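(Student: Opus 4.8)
The plan is to eliminate, degree by degree with respect to the weight $w$ for which $f$ is semi-quasihomogeneous, every term of $f$ lying in the Jacobian ideal $\Jac(f_0)$, keeping only contributions along the system monomials. Write $f=f_0+f'$ with $f_0$ the quasihomogeneous part of $w$-degree $d$ and $f'\in E^w_{d+1}$. Since $f_0$ is non-degenerate it has finite Milnor number and is weighted $k$-determined for some $k$, which I would choose at least as large as the top $w$-degree occurring among $e_1,\ldots,e_\mu$; it then suffices to treat the finitely many degrees $d<D\le k$, because the remaining tail in $E^w_{k+1}$ can be absorbed by the determinacy of $f_0$. All automorphisms I construct will have positive filtration, so they preserve $E^w_d$ and fix the $w$-degree-$d$ part $f_0$; in particular no terms of $w$-degree $<d$ are ever created, and the system monomials produced (all of $w$-degree $>d$) lie on or above $\Gamma(f)$, hence are among $e_1,\ldots,e_s$.

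The key algebraic input is the graded structure coming from $f_0$. As $f_0$ is quasihomogeneous, each $\partial f_0/\partial x_i$ is quasihomogeneous of $w$-degree $d-w_i$, so $\Jac(f_0)$ is a graded ideal and the local algebra is graded; taking $e_1,\ldots,e_\mu$ monomial (hence homogeneous), each $w$-degree-$D$ homogeneous component decomposes as $(\Jac(f_0))_D$ plus the span of those $e_k$ with $w\dash\deg(e_k)=D$. I would then run the infinitesimal elimination step. Assume after earlier steps that $f=f_0+\sum_{w\dash\deg(e_k)\le D-1}c_ke_k+p+r$, with $p$ the homogeneous degree-$D$ part and $r\in E^w_{D+1}$. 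Split $p=p_J+p_e$ by the decomposition above, write $p_J=\sum_i a_i\,\partial f_0/\partial x_i$ with $a_i$ quasihomogeneous of $w$-degree $D-d+w_i\ge w_i+1$, and set ${\bf v}=-\sum_i a_i\frac{\partial}{\partial x_i}$, a formal vector field of filtration $D-d\ge 1$. Its associated automorphism $\phi$ (tangent to the identity, as each $a_i\in\m$) satisfies $\phi(f_0)=f_0+L_{\bf v}(f_0)+(\text{filtration}>D)=f_0-p_J+(\text{filtration}>D)$, while it alters $p$, $r$, and the earlier system terms only in filtration $>D$. Hence $\phi(f)=f_0+\sum_{w\dash\deg(e_k)\le D-1}c_ke_k+p_e+(\text{terms of }w\dash\deg>D)$, which removes the Jacobian part at level $D$ and leaves a pure system contribution $p_e=\sum_{w\dash\deg(e_k)=D}c_ke_k$ with constant coefficients.

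Iterating over $D=d+1,\ldots,k$ and composing the resulting automorphisms — the composition converging in the filtration topology because the $D$-th factor differs from the identity only in filtration $D-d$ — transforms $f$ into $f_0+\sum_{k=1}^s c_ke_k$ modulo $E^w_{k+1}$. Invoking the weighted $k$-determinacy of $f_0$ (and the preceding Lemma, which guarantees that $e_1,\ldots,e_\mu$ is simultaneously a basis for the local algebras of $f_0$ and $f$) then kills the tail in $E^w_{k+1}$, giving $f\sim f_0+\sum_{k=1}^s c_ke_k$ with constant $c_k$. The main obstacle is the graded decomposition together with the existence of cofactors at the heart of the elimination step: one must know that $p_J$ is realized by quasihomogeneous $a_i$ of the precise $w$-degree $D-d+w_i$, which rests on the homogeneity of $\Jac(f_0)$ and hence on the quasihomogeneity of $f_0$. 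The secondary technical point is establishing finite determinacy, so that the procedure genuinely terminates rather than merely converging formally, and this is exactly where the non-degeneracy of $f_0$ enters.
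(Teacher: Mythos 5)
Your argument is correct and follows essentially the same route as the paper's: it is the degree-by-degree elimination of Lemma~\ref{lemma:quasiNF} (decompose each weighted-homogeneous piece as a Jacobian-ideal part plus a span of system monomials, kill the former by the substitution $x_i\mapsto x_i-v_i'$, and iterate), and your observation that the cofactors can be taken quasihomogeneous of the exact complementary degree is precisely the statement that quasihomogeneous functions satisfy Condition~A. The only cosmetic differences are that you package the coordinate change as the flow of a vector field and terminate via finite weighted determinacy rather than via formal convergence of the composed automorphisms.
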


 In \cite{A1974}, the following approach is used to extend the above results to a larger class of singularities of corank~$2$. 

\begin{defn}
A piecewise homogeneous function $f_0$ of degree $d$ satisfies \textbf{Condition A}, if for every function $g$ of filtration $d+\delta>d$ in the ideal spanned by the derivatives of $f_0$, there is a decomposition
\[g=\sum_i\frac{\partial f_0}{\partial x_i}v_i+g',\]
where the vector field $v$ has filtration $\delta$ and $g'$ has filtration bigger than $d+\delta$.
\end{defn}
 
Note that quasihomogeneous functions satisfy Condition A.

\begin{theorem}\label{principalpart}Suppose that the principal part $f_0$ of the piecewise homogeneous function $f$ has finite Milnor number and satisfies Condition A. Let $e_1,\ldots,e_s$ be a system of the local algebra of $f_0$.
Then $f$ is equivalent to a function of the form $f_0+\sum_k c_ke_k$ with constants $c_k$. \end{theorem}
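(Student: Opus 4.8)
\emph{Plan of proof.} The plan is to follow the proof of Theorem~\ref{thm:quasiNF} and to eliminate, one weighted-filtration level at a time, every monomial above $\Gamma(f_0)$ that does not belong to the system $e_1,\ldots,e_s$, using Condition~A in place of the infinitesimal decomposition that is automatic in the quasihomogeneous case. Write $w=w(f_0)$ and $d=d(f_0)$. Since $f_0$ is the principal part of $f$, we have $f=f_0+h$ with $h\in E_{d+1}^w$. I would prove by induction on $N\ge d$ that, after a finite composition of coordinate changes, $f$ is equivalent to a power series whose terms of $w$-filtration $\le N$ are exactly $f_0+\sum_k c_ke_k$ for constants $c_k$ and system monomials $e_k$ of filtration $\le N$, all remaining terms having filtration $>N$. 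The base case $N=d$ is immediate, since the filtration-$d$ part of $f$ equals $f_0$ by definition.

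For the inductive step, assume the claim up to level $N-1$ and let $f_N\in E_N^w$ denote the filtration-$N$ part of the current $f$. Reducing $f_N$ modulo $\Jac(f_0)$ by a filtration-respecting normal form, I can write
\[
f_N=\sum_k c_ke_k+g+\rho,\qquad g\in\Jac(f_0)\cap E_N^w,\ \ \rho\in E_{N+1}^w,
\]
where the remainder involves only system monomials $e_k$ of filtration $N$; indeed the basis monomials lying below $\Gamma(f_0)$ have filtration $<d\le N$ and therefore cannot appear. Applying Condition~A to $g$ with $\delta=N-d>0$ yields
\[
g=\sum_i\frac{\partial f_0}{\partial x_i}v_i+g',
\]
where the vector field ${\bf v}=\sum_i v_i\frac{\partial}{\partial x_i}$ has filtration $\delta$ and $g'\in E_{N+1}^w$.

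I would then take the automorphism $\phi$ with $\phi(x_i)=x_i-v_i$. As ${\bf v}$ has filtration $\delta=N-d$, the map $\phi$ has filtration $\delta$ in the sense of Definition~\ref{phi}, so it fixes every term of filtration $<N$, and
\[
\phi(f)=f-\sum_i\frac{\partial f_0}{\partial x_i}v_i+r=f-g+g'+r,
\]
where $r\in E_{N+1}^w$ absorbs the quadratic correction in ${\bf v}$ together with $L_{\bf v}(h)$. Hence the filtration-$N$ part of $\phi(f)$ equals $f_N-g=\sum_k c_ke_k$, a combination of system monomials only, while nothing of lower filtration is changed; this advances the induction. Finally, since $f_0$ has finite Milnor number, $f$ is finitely $w$-determined, so only finitely many levels must be treated, and the finite composition $\Psi$ of the automorphisms above satisfies $\Psi(f)=f_0+\sum_{k=1}^s c_ke_k+g''$ with $g''$ of filtration beyond the determinacy bound; by finite determinacy this tail may be discarded, giving $f\sim f_0+\sum_{k=1}^s c_ke_k$.

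I expect the essential difficulty to be exactly the infinitesimal elimination step, which is precisely where Condition~A is used: it guarantees that the Jacobian-ideal part $g$ of each $f_N$ can be integrated to a genuine coordinate change ${\bf v}$ of the precise filtration $\delta=N-d$, without disturbing the already-normalised lower levels. For quasihomogeneous $f_0$ this decomposition holds automatically, which is how Theorem~\ref{thm:quasiNF} is obtained; the content of the present statement is that Condition~A is the minimal additional hypothesis under which the same filtration-by-filtration argument survives for a piecewise homogeneous principal part. A secondary point requiring care is the filtration bookkeeping in the reduction modulo $\Jac(f_0)$, namely verifying that the normal form of a filtration-$N$ element never produces basis monomials lying below $\Gamma(f_0)$, so that the remainder is supported on the system $e_1,\ldots,e_s$.
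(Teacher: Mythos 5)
Your argument is correct and takes essentially the same route as the paper, which proves the theorem by iteratively applying Lemma~\ref{lemma:quasiNF} one weighted degree at a time: decompose the degree-$N$ part modulo $\Jac(f_0)$ into system monomials plus an element of the Jacobian ideal, realize the latter as $\sum_i\frac{\partial f_0}{\partial x_i}v_i$ with a vector field of filtration $N-d$, remove it by the coordinate change $x_i\mapsto x_i-v_i$, and terminate by finite determinacy. The only cosmetic difference is that you invoke Condition~A directly to obtain the correctly filtered vector field, whereas the paper's proof of the lemma obtains it by truncating the coefficients $v_i$.
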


Following Arnold's proof of Theorem \ref{thm:quasiNF}, Theorem \ref{principalpart} can be proven by iteratively applying the following lemma.

\begin{lemma}\label{lemma:quasiNF}
Let $f_0\in\C[[x_1,\ldots,x_n]]$ be a piecewise homogeneous function of weighted $w$-degree $d_w$ that satisfies Condition A, and let $e_1,\ldots,e_r$ be the monomials of a given $w$-degree $d'>d_w$ in a system of the local algebra of $f_0$. Then, for every series of the form $f_0+f_1$, where the filtration of $f_1$ is greater than $d_w$, we have
\[f_0+f_1\sim f_0+f'_1,\]
where the terms in $f'_1$ of degree less than $d'$ are the same as in $f_1$, and the part of degree $d'$ can be written as $c_1e_1+\cdots+c_re_r$ with $c_i\in\C$.
\end{lemma}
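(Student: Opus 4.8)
The plan is to prove this by a single coordinate change that normalizes the $w$-degree-$d'$ part of $f_1$ without disturbing anything of lower degree. Write $f_1 = h + r$, where $h$ is the piecewise homogeneous part of $w$-degree exactly $d'$ and $r$ collects the terms of degree strictly less than $d'$ together with everything of degree greater than $d'$. Since by assumption every term of $f_1$ has $w$-degree $> d_w$, the degree-$d'$ piece $h$ lies in the filtration space $E^w_{d'}$ with $d' > d_w$. The goal is to find a $\C$-algebra automorphism $\phi$ of filtration $d'-d_w > 0$ such that $\phi(f_0+f_1)$ agrees with $f_0 + f_1$ below degree $d'$ and has its degree-$d'$ part equal to a $\C$-linear combination $c_1 e_1 + \cdots + c_r e_r$ of the chosen system monomials.

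First I would reduce the existence of $h$'s decomposition to the structure of the local algebra. The monomials $e_1,\ldots,e_\mu$ representing a basis of the local algebra of $f_0$ span $\C[[x_1,\ldots,x_n]]$ modulo the Jacobian ideal $\Jac(f_0)$. Restricting to the homogeneous component of $w$-degree $d'$, the piece $h$ can therefore be written modulo the degree-$d'$ part of $\Jac(f_0)$ as a combination of those basis monomials of degree $d'$; among these, the ones lying strictly underneath $\Gamma(f_0)$ can be absorbed, so only the system monomials $e_1,\ldots,e_r$ (those on or above the Newton polygon of degree $d'$) remain. Concretely, there exist constants $c_i \in \C$ and a polynomial $g \in \Jac(f_0)$, homogeneous of $w$-degree $d'$, with
\[
h = \sum_{i=1}^r c_i e_i + g.
\]
This is exactly the point at which finite Milnor number (ensuring $e_1,\ldots,e_\mu$ really is a basis) and the combinatorics of the Newton polygon enter.

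Next I would invoke Condition A to turn the Jacobian remainder $g$ into the effect of an infinitesimal coordinate change. Since $g$ has filtration $d' = d_w + \delta$ with $\delta = d'-d_w > 0$ and $g \in \Jac(f_0)$, Condition A yields a decomposition
\[
g = \sum_i \frac{\partial f_0}{\partial x_i} v_i + g',
\]
where the vector field $\mathbf{v} = \sum_i v_i \frac{\partial}{\partial x_i}$ has filtration $\delta$ and $g'$ has filtration strictly greater than $d'$. Let $\phi := \exp(-\mathbf{v})$ be the automorphism obtained by integrating $-\mathbf{v}$; because $\mathbf{v}$ has filtration $\delta > 0$, $\phi$ has filtration $\delta$ and acts as the identity on filtration-$d_w$ data up to degree $d'$. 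Applying $\phi$ to $f_0$ and using Taylor expansion, $\phi(f_0) = f_0 - L_{\mathbf{v}}(f_0) + (\text{filtration} > d') = f_0 - g + (\text{filtration} > d')$, since $L_{\mathbf{v}}(f_0) = \sum_i v_i \frac{\partial f_0}{\partial x_i} = g - g'$ and $g'$ already lies in higher filtration. Meanwhile, $\phi$ changes $f_1$ only in degrees $> d'$ because $f_1 \in E^w_{d_w+1}$ and $\phi-\id$ raises filtration by $\delta$. Collecting terms, $\phi(f_0+f_1) = f_0 + \sum_{i=1}^r c_i e_i + (\text{terms of degree}<d'\text{ unchanged}) + (\text{filtration}>d')$, which is precisely $f_0 + f_1'$ with the asserted shape.

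The main obstacle I expect is the bookkeeping in the final step: one must verify carefully that $\phi$ leaves the degree-$(<d')$ part of $f_1$ genuinely untouched and that all the error terms — the $g'$ from Condition A, the higher-order tail of $\exp(-\mathbf{v})$, and the cross terms $(\phi-\id)f_1$ — land strictly above $w$-degree $d'$, so that they can be harmlessly folded into the higher-degree part of $f_1'$. This is where the filtration hypothesis on $\mathbf{v}$ and the remark that $\phi_{j-1}^w = \id$ forces $\phi_j^w$ to have filtration $j$ must be used to control every contribution simultaneously; the algebra is routine but the degree tracking is the crux, and it is exactly this controlled, degree-by-degree correction that makes the iterated application yield Theorem \ref{principalpart}.
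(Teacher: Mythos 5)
Your argument is correct and follows essentially the same route as the paper's proof: decompose the degree-$d'$ part of $f_1$ modulo $\Jac(f_0)$ against the degree-$d'$ monomials of the system, and remove the Jacobian part by a coordinate change of filtration $d'-d_w$ whose error terms all land above degree $d'$. The only differences are cosmetic: the paper applies the substitution $x_i\mapsto x_i-v_i'$ directly (truncating the $v_i$ to the relevant jets) where you integrate the vector field to $\exp(-\mathbf{v})$, and you invoke Condition A explicitly to control the filtration of the vector field, a point the paper leaves implicit in its truncation step.
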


\begin{proof}
Let $g({\bf x})$ denote the sum of the terms of degree $d'$ in $f_1$. There exists a decomposition of $g$ of the form
\[g({\bf x})=\sum_i\frac{\partial f_0}{\partial x_i}v_i({\bf x})+c_1e_1+\cdots +c_re_r,\qquad v_i\in\C[[x_1,\ldots,x_n]],\]
since $e_1,\ldots,e_r$ represent a monomial vector space basis of  the local algebra of $f_0$ in degree $d'$. Let $d(x_i)$ be the $w$-degree of $x_i$, and let $v'_i:=w\dash\jet(v_i,d(x_i))$. Then
\[g({\bf x})=\sum_i\frac{\partial f_0}{\partial x_i}v'_i({\bf x})+c_1e_1+\cdots +c_re_r- g'({\bf x}),\]
where $g'({\bf x})$ has filtration greater than $d'$. 
 Applying the transformation defined by
\[x_i\mapsto x_i-v'_i({\bf x})\]
to $f=f_0+f_1$, we transform $f$ to
\[f_0({\bf x})+\left( f_1({\bf x})+(c_1e_1({\bf x})+\cdots+c_re_r({\bf x})-g({\bf x})\right)+R({\bf x}),\]
where the filtration of $R$ is greater than $d'$.
\end{proof}

 \begin{remark} 
A system of the local algebra is in general not unique. For his lists of normal forms  of hypersurface singularities
, Arnold has chosen  in each case (in particular) a specific system of the local algebra. In the rest of the paper, we call these systems the \textbf{Arnold systems}.
 \end{remark}

\begin{defn}[\cite{K1976}]
We say that $f\in \C[[x,y]]$ has \textbf{non-degenerate Newton boundary} if for every face $\Delta$ of $\Gamma(f)$ the saturation of $\jet(f,\Delta)$ has finite Milnor number.\footnote{We say that the singularity defined by $f$ has non-degenerate Newton boundary  if there exists a germ $\tilde{f}\in \C[[x,y]]$ with $f\sim \tilde{f}$ which has non-degenerate Newton boundary. We use the analogous terminology also for semi-quasihomogeneous.
}
\end{defn}

\begin{remark}\label{rmk elimination}
\begin{enumerate}
\item Note that if $f$ has non-degenerate Newton boundary and finite Milnor number, then the principal part of $f$ with respect $w(f)$ has finite Milnor number.
\item Also note that for Arnold's normal forms $\NF(T)$ of corank and modality $\leq 2$ the principal part with respect to $w(T)$ satisfies Condition A. 
\item Suppose that $f$ is a function of corank $2$ with non-degenerate Newton boundary such that, for one of Arnold's normal forms $\NF(T)$ of modality $\leq 2$, the support of the principal part $f_0$ of $f$ with respect to $w(T)$ coincides with that of the principal part of $\NF(T)$. Then 
a system of the local algebra of $f_0$ is also a system of the local algebra of $f$.
\end{enumerate}
\end{remark}

 \begin{remark}
It follows from Lemma \ref{lemma:quasiNF} 
that all hypersurface singularities of corank $\leq 2$ and modality $\leq 2$ with non-degenerate Newton boundary are finitely weighted determined. Moreover, we explicitely obtain the weighted determinacy for each such singularity.
\end{remark}

\section{A Classification Algorithm for Corank 2 Complex Simple, Unimodal and Bimodal Singularities 
}
\label{section:generalAlg}

We now describe an algorithm to determine an Arnold normal form equation for a given input polynomial $f\in\m^3$, $f\in\Q[x,y]$ of modality $\le 2$.  In this section, we limit our discussion on functions with a normal form with non-degenerate Newton boundary. In the case of normal forms with degenerate Newton boundary, our algorithm will resort to special algorithms described in Section \ref{section:DegenerateAlg}.  
Figures \ref{fig infinite} to 
\ref{fig W} illustrate the modality $2$ types with non-degenerate Newton boundary. The figures show in the gray shaded area all monomials which can possibly occur in a polynomial $f$ of the given type $T$
. The faces of the Newton polygon $\Gamma(T)$ are shown in blue. The dots with a thick black circle indicate the moduli monomials in the Arnold system. Red dots indicate monomials which are not in $\Jac(f)$. Monomials occuring in any normal form equation with non-zero coefficients are shown as blue dots.

The structure of our algorithm consists out of two basic steps, see Algorithm \ref{alg:Classification}. We first determine the complex type of $f$ by removing all the monomials underneath $\Gamma(T)$, in the semi-quasihomogeneous cases, and all the monomials underneath and on $\Gamma(T)$ which are not in $\NF(T)$, in the other cases (Algorithm \ref{alg:clas}). After that, we determine a normal form equation of $f$ (using Algorithm~\ref{alg:parameter} in the non-simple cases). More generally, we will formulate the algorithm in a way, that it is applicable to any $f\in\m^2$, and will recognize if $f$ is of modality $>2$, returning an error in this case.

\begin{algorithm}[ht]
\caption{Algorithm to classify singularities of modality $\leq 2$ corank $\leq 2$}%
\label{alg:Classification}
\begin{algorithmic}[1]

\Require{A polynomial germ $f\in\m^2$ over the rationals.}
\Ensure{
$\NF(f)$ as well as the values of all moduli parameters occuring in a normal form equations that is equivalent to $f$, if $f$ is of modality $\leq 2$, corank $\leq 2$; \texttt{false} otherwise.
}

\State Apply Algorithm \ref{alg:clas} to $f$.
\If{$T$ as returned by Algorithm \ref{alg:clas} is a simple type}
    \Return{$(\NF(T),())$}
\EndIf
\State Apply Algorithm \ref{alg:parameter} to the output of Algorithm \ref{alg:clas} and return the result.

\end{algorithmic}
\end{algorithm}

We first discuss Algorithm \ref{alg:clas}. If $f$ is of corank $\leq 1$, then $f$ is of type $A_k$, where $k=\mu(f)$. 
Suppose now that $f$ is of corank $2$. Determining $T$ in the process, we remove all monomials below $\Gamma(T)$ if $\Gamma(T)$ has only one face, and all monomials on or below $\Gamma(T)$ which are not in $\NF(T)$, if $\Gamma(T)$ has two faces.  Let $d$ be the maximal filtration of $f$.  If $f$ is of type $X_9$, nothing has to be done. Note that $f$ is of type $X_9$ if and only if the $d$-jet of $f$ has $4$ different roots over the complex numbers. If $f$ is not of type $X_9$, then Algorithm \ref{alg:linearTransformation} will transform $f$ such that $\supp(T,d)=\supp(\jet(f,d))$. Using \cite[Proposition 8]{realclassify1}, we find the corresponding linear transformation by factorizing $\jet(f,d)$.

At this stage we know that $\supp(\jet(f,d))\subset\supp(\NF(T))$. We store the monomials of the $d$-jet of $f$ in $S_0=\supp(\jet(f,d))$. The remainder of Algorithm \ref{alg:clas} will proceed in an iterative way, changing $f$ and $S_0$ in the process:  In each step of the iteration, we can have one of the following two possibilities for $\Gamma(f)$:
\begin{enumerate}[leftmargin=*]
\item\label{two face} Note that monomials of the form $x^{n_1}y$ or $xy^{n_2}$ cannot be intersection points of (finite) faces of $\Gamma(T)$.
If any of the monomials $m_0\in S_0$ which is not of this form lies on two faces of $\Gamma(f)$, it is clear that $\Gamma(T)$ has at least two faces with corner point $m_0$. The algorithm will then stay in this case. Let  $\Delta_i$ and $\Delta_j$ be the two different faces of $\Gamma(f)$ on which $m_0$ lies. The corner point in all modality $1$ and $2$ cases with a Newton polygon with two faces is either $x^2y^2$ or $x^2y^3$. It follows that if $m_0\neq x^2y^t$, $t=2$ or $t=3$, then $f$ is not of modality $\leq 2$. Otherwise, using the shape of $\Gamma_0:=\operatorname{span}(\Delta_i,\Delta_j)$ and the fact that $m_0=x^2y^t$ is a corner point of $\Gamma_0$, all monomials in $f$ on $\Gamma_0$ of the form $xy^{n}$ or $x^{n}y^{t-1}$ can be removed iteratively, by increasing degree, each time replacing the corresponding terms of the given degree by higher $w(f)$-degree terms using Algorithm \ref{alg:Transformation}. After each iteration, $f$, $\Delta_i$, $\Delta_j$ and $\Gamma_0$ are recalculated. In each iteration, there will either be no terms of the considered form on $\Gamma_0$, in which case the process stops, or the number of equivalence classes in the local algebra of $f$ represented by powers of $x$ or $y$ underneath $\Gamma_0$ strictly increases, except possibly in the last two steps of the process (where monomials on the final Newton polygon may be removed). Note that, if $x^{m_1}$ and $y^{m_2}$ are largest powers of $x$ and $y$ underneath $\Gamma_0$, then $1,x,\ldots,x^{m_1-1},y,\ldots,y^{m_2-1}$ represent different equivalence classes. Since $\mu(f)$ is finite, the process must stop after finitely many iterations. No further monomials on $\Gamma_0$ can be removed without creating terms underneath $\Gamma_0$.
Hence, in all cases in consideration, this algorithm will produce the Newton polygon of the normal form. In fact, if $\supp(f,\Gamma_0)$
does not coincide with $\supp(T,\Gamma_0)$
for some type $T$ of modality $\leq 2$, then the modality of $f$ is bigger than $2$.
Otherwise, $f$ is a germ of the corresponding type $T$, and all monomials in $f$ underneath or on $\Gamma(T)$ not in $\NF(T)$ are removed.\smallskip

\item Suppose no monomials in $S_0$, except monomials of the form $x^{n_1}y$ or $xy^{n_2}$, lie on two faces of $\Gamma(f)$. Then $f$ is not of type $X_{9+k}$ or $Y_{r,s}$, since these cases will be recognized to have two faces in the first iteration of the above step. All the monomials in $S_0$ lie on only one face of $\Gamma(f)$. Let $\Delta$ be this face. If $f_1:=\jet(f,\Delta)$ is non-degenerate, then $f$ is a semi-quasihomogeneous germ. Since $w\dash\jet(\phi_0^w(f),d(f))=\phi_0^w(f_1)$ for any automorphism $\phi$ of filtration $\ge 0$ with respect to the weight $w$ associated to $\Delta$, $\operatorname{span}(\Delta)$ is an invariant of the type of $f$. The corresponding type $T$ can, hence, be identified. The case $X_9$ will already be recognized as a semi-quasihomogeneous function in the first iteration, and $f$ will be returned by the algorithm without any change. 
In all other cases, the weight $w$ associated with $\Delta$ will be such that $w\dash\deg(x)>w\dash\deg(y)$. If $f_1$ is degenerate, then either $f$ has monomials underneath $\Gamma(T)$, or $\Gamma(T)$ is degenerate. For all semi-quasihomogeneous cases of modality $\leq 2$, except $X_9$, $\jet(T,d)$ is divisible by a power of $x$, and $x$ has the highest multiplicity among all prime factors. Any weighted jet of $\NF(T)$ with respect to a face lying below $\Gamma(T)$ and intersecting $\Gamma(T)$ in $\jet(T,d)$ has the same property. 
 Suppose $\Delta$ is such a face. Then $\supp(T,\Delta)=\{x^ny^m\}$ with $n>m$. Taking into account that the weighted degree of $x$ is greater than the weighted degree of $y$, it follows that $f_1=g_1^ny^m$ with $\deg_x(g_1)=1$.
The right equivalence $g_1\mapsto x$, $y\mapsto y$ transforms $f$ such that $\supp(f,\Delta)=\supp(T,\Delta)$.
If the normal form of $f$ has a non-degenerate Newton boundary, but is not semi-quasihomogeneous, we can proceed in the same way: Suppose $\Delta$ lies underneath or on the face of biggest slope of $\Gamma(T)$. If $g_1$ is the factor of highest multiplicity of $f_1$ with $\deg_x(g_1)=1$, then the right equivalence $g_1\mapsto x$, $y\mapsto y$ transforms $f$ such that $\supp(f,\Delta)=\supp(T,\Delta)$. We then update $S_0:=\supp(f,\Delta)$ and pass to the next iteration. If $f_1$ does not have any $x$-linear factor, then the normal form of $f$ has a degenerate Newton boundary. In this case, we resort to the algorithms described in Section \ref{section:DegenerateAlg}. Since $\mu(f)$ is finite, the same argument as in (\ref{two face}) shows that the iteration terminates after finitely many steps.
\end{enumerate}

\begin{algorithm}[p]
\caption{Determine the complex type of a corank $\leq 2$ singularity of modality $\le 2$ with non-degenerate Newton boundary.}%
\label{alg:clas}
\begin{spacing}{1.05}
\begin{algorithmic}[1]

\Require{A polynomial germ $f\in\m^2$ over the rationals.}

\Ensure{If $f$ is of modality $\leq 2$ and corank $\leq 2$, then the complex singularity type $T$ of $f$, and a polynomial $g$ right equivalent to $f$ such that the set of faces of $\Gamma(T)$ equals the set of faces of $\Gamma(w(T)\dash\jet(g,d(T))$; \texttt{false} otherwise.
}
\State $f:=$ residual part given by the splitting lemma applied to $f$, as implemented in \texttt{classify.lib}.
\If{$\corank(f)\leq 1$}\label{Begin Case A}
\Return $(f, A_{\mu(f)})$ 
\EndIf\label{End Case A}
\If{$\corank(f)>2$}
\Return \texttt{false}
\EndIf
\If{$f\in E_5$}
\Return \texttt{false} (modality $>2$)
\EndIf
\State $f$:= output of Algorithm \ref{alg:linearTransformation} applied to $f\in \Q[x,y]$ 
\State $S_0:=\supp(\jet(f,d))$, where $d:=$ maximal filtration of $f$  w.r.t.~the standard grading.
\While{\textbf{true}}
\State Let $\Delta_1,\ldots,\Delta_n$ be the faces of $\Gamma(f)$ ordered by increasing slope.
\If{exist $i\neq j$ and $n_1, n_2 >1$ with $m_0:=x^{n_1}y^{n_2}\in\supp(\Delta_i)\cap\supp(\Delta_j)\subset S_0$}
\If{$m_0\neq x^2y^2$ and $m_0\neq x^2y^3$}
\Return \texttt{false} (modality $>2$)
\EndIf
\State $\Gamma_0:=\operatorname{span}(\Delta_i,\Delta_j)$
\State $f_1:=\jet(f,\Gamma_0)$
\While{exist a term of the form $t=c\cdot x^{n_1-1}y^{r}$ or $t=c\cdot x^{r}y^{n_2-1}$ in $f_1$}
\State $f$:= output of Algorithm \ref{alg:Transformation} with input $f$, $f_1$, $t$, and weights $w(\Delta_i), w(\Delta_j)$
\State Let $\Delta_1,\ldots,\Delta_n$ be the faces of $\Gamma(f)$.
\State $\Gamma_0:=\operatorname{span}(\Delta_i,\Delta_j)$, where $i$ and $j$ are such that  $m_0\in\supp(\Delta_i)\cap\supp(\Delta_j)$
\State $f_1:=\jet(f,\Gamma_0)$
\EndWhile
\If{exists modal $1$ or $2$ type $T$ with 
  $\supp(T,\Gamma_0)=\supp(f_1)$}
  \Return{($f$, $T$)}
\Else
  \Return \texttt{false} (modality $>2$)
\EndIf
\Else
\State Let $\Delta$ be the face of $\Gamma(f)$ of smallest slope such that $S_0\subset\supp(\Delta)$.
\State $f_1:=\jet(f,\Delta)$
\If{$\mu(f_1)=\infty$ }
  \State Let $g_1$ be the factor of $f_1$ with highest multiplicity.
  \If{$\deg_x(g_1)=1$}
    \State Replace $f$ by $g_1\mapsto x$, $y\mapsto y$ applied to $f$.
    \State $S_0 :=\supp(\jet(f,\Delta))$
  \Else
    \If{ $\supp(f,\Delta)=\supp((y^2-x^3)^2)$}
      \Return{($f$, $W_{1,\mu(f)-15}^\sharp$)}
     \Else
      \Return{\texttt{false} (modality $>2$)}
    \EndIf
\EndIf
  \Else
\If{exists modal $1$ or $2$ type $T$ with
  $\Gamma(T)=\Gamma(f_1)$}
  \Return{($f$, $T$)}
\Else
  \Return \texttt{false} (modality $>2$)
\EndIf

\EndIf
\EndIf
\EndWhile
\end{algorithmic}
\end{spacing}

\end{algorithm}

\begin{algorithm}[hp]
\caption{Reverse linear jet.
}%
\label{alg:linearTransformation}
\begin{spacing}{1.05}
\begin{algorithmic}[1]

\Require{A polynomial $f\in\m^3\subset\Q[x,y]$ with $\jet(f,4)\neq 0$.
}
\Ensure{$g\in\m^3\subset\K[x,y]$, where $\K$ is an algebraic extension field of $\Q$, such that $g\sim f$, and, in case $f$ is of type $T\neq X_9$ of modality $\le 2$, then $\supp(\jet(g,d))=\supp(T,d)$ where $d$ is the maximal filtration of $f$ w.r.t.~the standard grading.}
\State Factorize $\jet(f,d)=c g_1^\alpha g_2^\beta g_3^\gamma g_4^\delta$ over $\C$, where $0\neq c\in\Q$, $g_1$, $g_2$, $g_3$ and $g_4$ are monic in $x$ and pairwise coprime, and $4\ge \alpha\ge \beta\ge \gamma\ge \delta\ge0$.\label{Begin Max Filtration}
\If{$\beta,\gamma,\delta=0$}
  \If{$g_1\neq c'y$, $c'\in\Q$}
  \State Replace $f$ with $g_1\mapsto x$, $y\mapsto y$ applied to $f$. 
  \Else
  \State Replace $f$ with $x\mapsto y$, $y\mapsto x$ applied to $f$.
  \EndIf
\EndIf
\If{$\gamma,\delta=0$}
	\State Replace $f$ with  $g_1\mapsto x$ and $g_2\mapsto y$ applied to $f$.
\EndIf
\If{$\alpha=2$ and  $\beta,\gamma=1$ and $\delta=0$}
\If{$g_1\neq c'y$, $c'\in\Q$}
\State Replace $f$ with $g_1\mapsto x$, $y\mapsto y$ applied to $f$.
\Else
\State Replace $f$ with $x\mapsto y$, $y\mapsto x$ applied to $f$.
\EndIf
\State Write $f=a_0x^4+a_1x^3y+a_2x^2y^2+R$, $a_0,a_1\in\Q$, $a_2\in\Q^{\times}$ and $R\in E_5$.
\State Replace $f$ with $y\mapsto y-\frac{a_1}{2a_2}x$, $x\mapsto x$ applied to $f$.
\EndIf 
\Return $f$
\end{algorithmic}
\end{spacing}
\end{algorithm}

\begin{algorithm}[hp]
\caption{Remove term via partials.
}%
\label{alg:Transformation}
\begin{spacing}{1.05}
\begin{algorithmic}[1]
\Require{$f,f_0\in\K [x,y]$ over a field $\K$, with $t$ a term of $f$, and weights $u_1,u_2\in\mathbb Z^2$.}
\Ensure{$g\in\K [x,y]$ such that $f\sim g$.
If called with input as in Algorithms \ref{alg:clas} or \ref{alg:parameter}, then $f=g+t+$ terms of higher $(u_1,u_2)\dash$degree than $t$. 
}
\State $m_x:=$ the sum of the terms of $\frac{\partial f_0}{\partial x}$ of lowest $u_2$-degree 
\State $m_{x,y}:=$ the term of $m_x $ of lowest $u_1$-degree 
\State $m_y:=$ the sum of the terms of $\frac{\partial f_0}{\partial y}$ of lowest $u_1$-degree
\State $m_{y,x}:=$ the term of $ m_y$ of lowest $u_2$-degree 
\If{$m_{x,y}|t$}\label{line:OneFace}
\State \vspace{-4mm}\label{line:OneFaceEnd}\begin{eqnarray*}
\alpha:\K[x,y]&\rightarrow&\K[x,y]\\
       x&\mapsto& x-t/m_{x,y}\\
       y&\mapsto& y
\end{eqnarray*}
\Return $\alpha(f)$
\EndIf
\If{$m_{y,x}|t$} 
\State \vspace{-4mm}\begin{eqnarray*}
\alpha:\K[x,y]&\rightarrow&\K[x,y]\\
       x&\mapsto& x\\
       y&\mapsto&y-t/m_{y,x}
 \end{eqnarray*}
\Return $\alpha(f)$
\EndIf
\Return $f$\label{End Above}
\end{algorithmic}
\end{spacing}
\end{algorithm}

We now discuss Algorithm \ref{alg:parameter}, which determines the values of the moduli parameters. Let $w=w(T)$ be the weight associated to $\Gamma(T)$. If $\Gamma(T)$ has only one face $\Delta$, then $\supp(f,\Delta)$ is not necessarily equal to $\supp(\NF(T),\Delta)$. We achieve equality by a weighted linear transformation. In the cases  where $\Gamma(T)$ has two faces, equality has already been achieved in Algorithm \ref{alg:clas}. Above $\Gamma(T)$, we then use the method described in the proof of Lemma \ref{lemma:quasiNF} to reduce $f$ modulo $\Jac(f_0)$ where $f_0=\jet(f,\Gamma(T))$: We iteratively apply Algorithm \ref{alg:Transformation} to each term, in the two face case only considering terms in $\Jac(f_0)$, proceeding weighted degree by weighted degree in increasing order (and in each weighted degree according to a total (ordinary) degree ordering).  
After handling a given weighted degree, if Arnold's system for type $T$ contains a monomial $m$ of this degree, we write the sum of the remaining terms in the form
\[\frac{\partial f_0}{\partial x}v_1+\frac{\partial f_0}{\partial y}v_2 +cm,\]
where $v_1,v_2\in\C[x,y]$ are weighted homogeneous, $c\in\C$, and as \[\frac{\partial f_0}{\partial x}v_1+\frac{\partial f_0}{\partial y}v_2,\]
otherwise. By Remark \ref{rmk elimination} this is always possible.
Applying $x\mapsto x-v_1$, $y\mapsto y-v_2$, results in replacing the sum of the remaining terms by a sum of terms which are either in Arnold's system in the $w$-degree under consideration, or of higher $w$-degree. Since $f$ is weighted $d'$-determined, we stop the iteration when we reach degree $d'+1$, where $d'$ is the $w$-degree of the highest $w$-degree monomial in Arnold's system. 

\begin{remark}
In the semi-quasihomogeneous cases, line \ref{call alg 4} in Algorithm \ref{alg:parameter} can be omitted, since the reduction modulo $\Jac(f_0)$ is also handled by lines \ref{line arnold1} to \ref{line arnold end}.
\end{remark}

\begin{remark}
In Algorithm \ref{alg:parameter}, Arnold's system can be replaced by any other choice of a system of the local algebra.
\end{remark}

\begin{remark}
The algebraic extension of $\mathbb{Q}$ introduced for representing the moduli parameters can arise in two steps of the overall algorithm: Reversal of the linear jet in Algorithm \ref{alg:linearTransformation}, and rescaling of the variables at the end of Algorithm \ref{alg:parameter}. Note that the transformation reversing the linear jet is obtained from the factorization  $\jet(f,d)=c g_1^\alpha g_2^\beta g_3^\gamma g_4^\delta$. Here, a field extension can only occur if $\alpha=\beta=2$ and $\gamma=\delta=0$.
\end{remark}

\begin{algorithm}[ht]
\caption{Determine the moduli parameters of a normal form equation of a corank $2$ uni- or bimodal singularities.}%
\label{alg:parameter}
\begin{spacing}{1.05}
\begin{algorithmic}[1]

\Require{$f\in\m^3\subset\K[x,y]$, a germ of modality $1$ or $2$ and corank $2$ of type $T$ over  an algebraic extension field $\K$ of $\Q$, as returned by Algorithm \ref{alg:clas}. In particular, the  set of faces of $\Gamma(T)$ equals the set of faces of $\Gamma(w(T)\dash\jet(f,d(T))$.}

\Ensure{The normal form of $f$, as well as the values of all moduli parameters occuring in a normal form equations that is equivalent to $f$, specified as elements of an algebraic extension field of $\K$.
}
\If{$T=W_{1,\mu-15}^\sharp$ for some $\mu$} \label{line branch deg}
  \Return result of Algorithm \ref{alg:ClassificationDeg} applied to $f$\label{line branch deg2}
\EndIf
\State $w:=w(T)$ and $d:=d(T)$
	\If{$\Gamma(T)$ has exactly one face $\Delta$}
		\State Apply a weighted homogeneous transformation to $f$ such that 				
$\supp(f,\Delta)=\supp(T,\Delta)$.
	\EndIf
	\State $d':=$ highest $w$-degree of a monomial in Arnold's system of $T$
      \State $f_0:=w\dash\jet(f,d)$
	\For{$ j=d+1,\ldots, d'$}
	\For{\textbf{all} terms $t$ of $f$ of $w$-degree $j$  in increasing order by a total degree ordering}\label{above nondeg}
		 \If{$\Gamma(T)$ has exactly one face}
		\State $f:=$ result of Algorithm~\ref{alg:Transformation} with input $f$, $f_0$, $t$ and $(1,1)$, $(1,1)$\label{call alg 4}
		\Else
		 \If{$t\in \Jac (f_0)$}
		    \State $f:=$ result of Algorithm~\ref{alg:Transformation} with input $f$, $f_0$, $t$ and $w_2$, $w_1$
		     \EndIf
		\EndIf
	\EndFor
\If{exists monomial $m$ of $w$-degree $j$ in Arnold's system}\label{line arnold1}
    \State \parbox[t]{\dimexpr\linewidth-\algorithmicindent-\algorithmicindent}{Write $w\dash\jet(f,j)-w\dash\jet(f,j-1)=\frac{\partial f_0}{\partial x}v_1+\frac{\partial f_0}{\partial y}v_2+cm$ with $c\in\K$, $v_1,v_2\in \K[x,y]$ weighted homogeneous.}
\Else
    \State \parbox[t]{\dimexpr\linewidth-\algorithmicindent-\algorithmicindent}{Write $w\dash\jet(f,j)-w\dash\jet(f,j-1)=\frac{\partial f_0}{\partial x}v_1+\frac{\partial f_0}{\partial y}v_2$ with $v_1,v_2\in \K[x,y]$ weighted homogeneous.}\label{line arnold end}
\EndIf
\State  Apply  $x\mapsto x-v_1$, $y\mapsto y-v_2$ to $f$.\label{line arnold4}    
	\EndFor
	\State Delete all terms in $f$ of $w$-degree $>d'$.
\State Apply transformation $x\mapsto ax$, $y\mapsto by$ over an algebraic extension of $\K$ to transform the non-parameter terms to the terms of $\NF(T)$.
\State Read off the parameters $a_i$.
\Return ($\NF(T)$, $(a_i)$)
\end{algorithmic}
\end{spacing}
\end{algorithm}

\section{A Classification Algorithm for Corank $2$, Bimodal Singularities with Degenerate Newton Boundary}\label{section:DegenerateAlg}

In this section we give a classification algorithm for the singularities $W_{1,\mu-15}^\sharp$, where $\mu$ is the Milnor number,  in Arnold's list. They have the property that in all coordinate systems the Newton boundary is degenerate, which is the reason that they have to be treated separately. They are of multiplicity $4$ and the $4$-jet is a $4$-th power of a linear homogeneous polynomial. After a suitable automorphism of $\C[[x,y]]$, we may assume that the corresponding polynomial is of the form
\[
f=(x^2+y^3)^2+\sum\limits_{3i+2j\geq 12+d} w_{ij}x^iy^j\ ,\ d\geq 1.
\]
This automorphism was already constructed in the previous section. Singularities of this type have been studied in \cite{LP}. It is proved that the Milnor number satisfies $\mu(f)\geq 15+d$, and equality holds if and only if $$\sum\limits_{3i+2j=12+d}(-1)^{[i/2]} w_{ij}\neq 0.$$ If the Milnor number $\mu(f)=15+d$ is even, then the germ of the curve defined by $f$ is irreducible with semi--group $\langle 4,6,12+d\rangle$. In the odd case, the curve has two branches. Let $$f=(x^2+y^3)^2+\sum\limits_{3i+2j>12} w_{ij} x^iy^j$$ and assume $\mu:=\mu(f)<\infty$.
Let $>$ be the weighted degree reverse lexicographical ordering with respect to the weights $(3,2)$ on $\C[[x,y]]$ with $x>y$.

In \cite{LP} it is proved that in case of $\mu$ being even the leading ideal of the Jacobian ideal $\langle \frac{\partial f}{\partial x}, \frac{\partial f}{\partial y}\rangle$ is generated by $x^3, x^2y^2, xy^{\frac{\mu-2}{2}}$. If $\mu$ is odd,  then the leading ideal is generated by $x^3, x^2y^2, xy^{\frac{\mu-5}{2}}, y^{\frac{\mu+1}{2}}$.
We obtain a monomial basis of $\C[[x,y]]/\langle \frac{\partial f}{\partial x}, \frac{\partial f}{\partial y}\rangle$ as $\{x^iy^i\}_{(i,j)\in B}$ with 
\[
B=\left\{(i,j) \, \big\vert\,  i\leq 2, j\leq 1\right\}\cup \left\{(i,j) \, \bigg\vert\,  i\leq1, 2\leq j\leq \frac{\mu-4}{2}\right\}
\]
in case that $\mu$ is even and
\[B=\left\{(i,j)\, \big\vert\, i\leq 2, j\leq 1\right\}\cup \left\{(1,j)\, \bigg\vert\, 2\leq j\leq \frac{\mu-7}{2}\right\}\cup \left\{(0,j) \, \bigg\vert\, 2\leq j\leq \frac{\mu-1}{2}\right\},
\]
in case that $\mu$ is odd.
Let $$B_1:=\{(1, \frac{\mu-6}{2}), (1, \frac{\mu-4}{2})\}$$ if $\mu$ is even and $$B_1:=\{0, \frac{\mu-3}{2}), (0, \frac{\mu-1}{2})\}$$ if $\mu$ is odd. 

In \cite{LP}, the following theorem is proved.

\begin{theorem}\label{thm:autom}
There exists an automorphism $\varphi$ of $\C[[x,y]]$ such that $$\varphi(f)=(x^2+y^3)^2+\sum\limits_{(i,j)\in B_1} w_{ij}x^iy^j.$$
\end{theorem}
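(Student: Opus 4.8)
The plan is to produce $\varphi$ as a finite composition of coordinate changes, constructed weighted degree by weighted degree with respect to the weight $w=(3,2)$, in the spirit of the proof of Lemma~\ref{lemma:quasiNF}, but working with the full Jacobian ideal $\Jac(f)=\langle f_x,f_y\rangle$ (with $f_x,f_y$ the partial derivatives) rather than with $\Jac(f_0)$ of the principal part $f_0=(x^2+y^3)^2$ of weighted degree $12$. Writing $f=f_0+g$ with $g$ of weighted filtration $>12$, the elementary mechanism is that for $\varphi=\id+(v_1,v_2)$ with $v_1,v_2\in\m$ one has
\[
\varphi(f)=f+v_1f_x+v_2f_y+R ,
\]
where $R$ has strictly higher weighted filtration than $v_1f_x+v_2f_y$. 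Running through increasing weighted degrees $d=13,14,\dots$, I alter the weighted-degree-$d$ terms of $f$ by the degree-$d$ part of $v_1f_x+v_2f_y$ and defer $R$ to later steps; since $\mu<\infty$ forces $f$ to be finitely determined, only finitely many degrees must be treated.

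Into this scheme I would feed the standard basis of $\Jac(f)$ recalled above. In each weighted degree the monomials lying outside the leading ideal of $\Jac(f)$ are precisely the elements of $B$ of that degree, while those inside the leading ideal reduce into $B$. Reading off the explicit shapes of the leading ideal and of $B$, and distinguishing $\mu$ even from $\mu$ odd, one checks that $B_1$ consists exactly of the two monomials of highest weighted degree (namely $\mu-3$ and $\mu-1$) among the basis monomials lying strictly above the Newton polygon. The goal therefore reduces to removing, by right equivalence, every basis monomial of weighted degree $>12$ other than the two elements of $B_1$.

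The crux, and the reason the semi-quasihomogeneous normal-form theorem (Theorem~\ref{thm:quasiNF}) cannot be invoked, is the degeneracy of $f_0$: since $f_x$ and $f_y$ have principal parts $4x(x^2+y^3)$ and $6y^2(x^2+y^3)$, both carrying the factor $x^2+y^3$, the combination $v_1(f_0)_x+v_2(f_0)_y$ lies in the ideal $(x^2+y^3)$ and spans, in each degree, only the \emph{diagonal} subspace $(x^2+y^3)\,(x,y^2)$. This diagonal already fails to separate, say, $xy^5$ from its equal-degree partner $x^3y^2$, so filtration-increasing changes alone are insufficient. The decisive point is that $\Jac(f)$ is strictly larger than $\Jac(f_0)$, of finite colength: pairing $v_1,v_2$ with the higher terms of $f_x,f_y$ coming from $g$ produces off-diagonal contributions proportional to the coefficients of exactly the monomials to be cancelled, and, where this is still not enough, admitting nonzero linear parts in $v_1,v_2$ (i.e.\ rescalings) supplies the missing directions. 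I would impose the single linear relation that keeps the degree-$12$ part of the transformation proportional to $f_0$, so that $f_0$ is merely rescaled and can be renormalised at the end, and then solve the resulting affine systems degree by degree.

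The main obstacle is precisely this degeneracy: the elimination of the ``extra'' basis monomials above the Newton polygon is genuinely nonlinear and hinges on the finite-colength standard basis of $\Jac(f)$ together with the presence of the lower moduli terms to break the diagonal degeneracy, which is why I rely on the explicit description from \cite{LP}. Since $f$ is finitely determined, after all weighted degrees up to the determinacy bound have been treated the composition terminates; renormalising $f_0$ back to $(x^2+y^3)^2$ then yields $\varphi(f)=(x^2+y^3)^2+\sum_{(i,j)\in B_1}w_{ij}x^iy^j$, as claimed.
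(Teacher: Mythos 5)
Your diagnosis of the difficulty is accurate: $\Jac(f_0)=\langle 4x(x^2+y^3),\,6y^2(x^2+y^3)\rangle$ only reaches the ``diagonal'' $(x^2+y^3)\langle x,y^2\rangle$ in each weighted degree, so Condition A fails and Lemma~\ref{lemma:quasiNF} does not apply. Your second remedy is also essentially the paper's: the coordinate change exploiting the higher-order part of $f_x,f_y$ is realized there as the exponential of the vector field $c\,(3y^2\frac{\partial}{\partial x}-2x\frac{\partial}{\partial y})$, which fixes $(x^2+y^3)^2$ exactly and pushes the \emph{nonzero} modulus term $a_0x^{i_0}y^{j_0}$ of degree $\mu-3$ onto the single off-diagonal monomial of degree $\mu-2$, with $c$ chosen proportional to $e/((\mu-3)a_0)$.

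There is, however, a genuine gap in how you treat all the other degrees. You reduce the problem to ``removing every basis monomial of weighted degree $>12$ other than the two elements of $B_1$'', but the elements of $B$ (such as $xy^5$, $y^7$, \dots) are by definition \emph{not} in $\Jac(f)$, so no reduction modulo the Jacobian ideal, and indeed no tangent-space argument, can remove them individually; if their coefficients were unconstrained they would be additional moduli and the theorem would be false. What makes them removable --- and what your argument never invokes --- is the Luengo--Pfister Milnor number formula: $\mu(f)=15+d$ forces $\sum_{3i+2j=12+a}(-1)^{[i/2]}w_{ij}=0$ for every $a<d$, i.e.\ the \emph{entire} weighted-homogeneous part of each degree $12+a<\mu-3$ is divisible by $x^2+y^3$, hence lies in the diagonal and is killed by ``completing the square'': $(x^2+y^3)^2+l(x^2+y^3)=(x^2+y^3+\tfrac12 l)^2+(\text{higher order})$, followed by an automorphism sending $x^2+y^3+\tfrac12 l$ back to $x^2+y^3$. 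The same decomposition (degree part $=$ one monomial $+$ multiple of $x^2+y^3$) then isolates $a_0$ in degree $\mu-3$, the coefficient $e$ in degree $\mu-2$, and $a_1$ in degree $\mu-1$. Your proposed substitutes cannot fill this role in degrees $13,\dots,\mu-4$: rescalings act diagonally on monomials and remove nothing, and the off-diagonal contributions from $g$ require a nonzero lower-degree term of $g$ to pair against, which does not exist below the first modulus. Once you add the $\mu$-driven divisibility argument from \cite{LP}, your degree-by-degree scheme becomes precisely the paper's proof.
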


\begin{note}
In particular, it follows that these singularities are bimodal.
\end{note}

\begin{remark}
The normal form given in this way for the case that the Milnor number is odd differs from Arnold's normal form. Instead of $y^{\frac{\mu-3}{2}}$ and $y^{\frac{\mu-1}{2}}$, he used the monomials $x^2y^{\frac{\mu-9}{2}}$ and $x^2y^{\frac{\mu-7}{2}}$. From a computational point of view, our choice is better. It is easy to convert our normal form to Arnold's normal form. 
See Figure \ref{fig infinite deg}, for an illustration of the normal forms (using our choice of parameter monomials).
\end{remark}

The construction of the automorphism in the theorem is done separately for each weighted degree:
Assume we have already $$f=(x^2+y^3)^2+\sum\limits_{3i+2j\geq 12+a} w_{ij}x^iy^j$$ for some $a$ (with Milnor number $\mu=15+d)$.
If $a<d$, then we have $$\sum\limits_{3i+2j=12+a}(-1)^{[i/2]}w_{ij}=0.$$
This implies that $$\sum\limits_{3i+2j=12+a} w_{ij}x^iy^j=l\cdot (x^2+y^3).$$
We obtain  $$f=\left(x^2+y^3+\frac{1}{2}l\right)^2+\sum\limits_{3i+2j>12+a}\widetilde{w}_{ij}x^iy^j$$ for suitable $\widetilde{w}_{ij}\in \C$. Now we can choose an automorphism $\varphi$ of $\C[[x,y]]$ such that $$\varphi\left(x^2+y^3+\frac{1}{2}l\right)=x^2+y^3 + \text{ terms of weighted degree } \geq \mu$$ (note that we could even find an automorphism mapping $x^2+y^3+\frac{1}{2} l$ to $x^2+y^3)$. We obtain $$\varphi(f)=(x^2+y^3)^2+\sum\limits_{3i+2j>12+a}\overline{w}_{ij}x^iy^j$$
 for suitable $\overline{w}_{ij}\in \C$. 

If $a=d$ then we have $$\sum\limits_{3i+2j=12+d} (-1)^{[i/2]} w_{ij}\neq 0.$$
Similarly as before, we can write
\[
\sum\limits_{3i+2j=12+d} w_{ij}x^iy^j=w_{i_0j_0}x^{i_0}y^{j_0}+l\cdot (x^2+y^3)
\]
with $$(i_0, j_0)=\left\{\begin{array}{ll}(0,\frac{\mu-3}{2}) & \text{if }\mu\text{ is odd}\\
(1, \frac{\mu-6}{2}) & \text{if } \mu \text{ is even}.
\end{array}\right .$$ Since the Milnor number is $15+d$, we obtain $w_{i_0, j_0}\neq 0$. Using a similar automorphism as in the previous case, we may assume with $a_0:=w_{i_0, j_0}$ (the first modulus), that
\[
f=(x^2+y^3)^2+a_0\cdot x^{i_0}y^{j_0}+\sum\limits_{3i+2j>12+d} w_{ij}x^iy^j\ .
\]
Note, that $12+d=\mu-3$, and we have to compute the normal form of $f$ up to degree $\mu-1$.
Now we can write 
$$
\sum\limits_{3i+2j=13+d} w_{ij}x^iy^j=e\cdot x^{i_1}y^{j_1}+l\cdot (x^2+y^3)$$ with $$(i_1, j_1)=\left\{\begin{array}{ll}(1,\frac{\mu-5}{2}) & \text{if }\mu\text{ is odd}\\
(0, \frac{\mu-2}{2}) & \text{if } \mu \text{ is even}.
\end{array}\right .$$
Using an automorphism as before, we may assume that $l=0$. 

If $e=0$, we are done with weighted degree $\mu-2$.

If $e\neq 0$ we define an automorphism $\varphi$ of $\C[[x,y]]$ by the exponential of the vector field $$\delta=c\cdot (3y^2\frac{\partial}{\partial x}-2x\frac{\partial}{\partial y})$$ with $$c=(-1)^{\mu-1}\frac{e}{(\mu-3)a_0}.$$
Since by construction, $\varphi(x^2+y^3)=x^2+y^3$, we obtain $$\varphi(f)=(x^2+y^3)^2+a_0 \cdot x^{i_0}y^{j_0}+\sum\limits_{3i+2j\geq 14+d}\widetilde{w}_{ij}x^iy^j$$ for suitable $\widetilde{w}_{ij}\in\C$.

\begin{remark}
Note, that for practical purposes, we have to compute $\varphi$ only up to weighted degree $5$, and apply it to $a_0 \cdot x^{i_0}y^{j_0}+\sum\limits_{3i+2j=13+d} w_{ij}x^iy^j$ since we know that $\varphi((x^2+y^3)^2)=(x^2+y^3)^2$.
\end{remark}

Now let $$(i_1, j_1)=\left\{\begin{array}{ll}(0, \frac{\mu-1}{2}) & \text{if } \mu \text{ is odd}\\
(1,\frac{\mu-4}{2}) & \text{if }\mu\text{ is even}\end{array}\right .$$ and write $$\sum\limits_{3i+2j=14+d}\widetilde{w}_{ij} x^iy^j=a_1 \cdot x^{i_1} y^{j_1}+l\cdot (x^2+y^3).$$ Using an automorphism as in the first case, we may assume $l=0$, and obtain as normal form 
\[
(x^2+y^3)^2+a_0 x^{i_0} y^{j_0}+a_1 \cdot x^{i_1} y^{j_1}.
\]
We summarize the approach in Algorithm \ref{alg:ClassificationDeg}.

\begin{algorithm}[h]
\caption{Algorithm to determine parameters for the bimodal singularities of type $W_{1,\mu-15}^\sharp$.}%
\label{alg:ClassificationDeg}
\begin{spacing}{1.05}
\begin{algorithmic}[1]

\Require{$f=\gamma\cdot (\alpha x^2+\beta y^3)^2+$ terms of weighted $(3,2)$-degree $>12\, \in \K[x,y]$ with $\alpha, \beta, \gamma \in \K$  and $\mu:=\mu(f)<\infty$.}
\Ensure{$A$ normal form of $f$ of the form $$\begin{array}{lcl}(x^2+y^3)^2+a_0\cdot xy^{\frac{\mu-6}{2}}+a_1 \cdot xy^{\frac{\mu-4}{2}} & \text{if} & \mu \text{ is even}\\
(x^2+y^3)^2+a_0\cdot y^{\frac{\mu-3}{2}}+a_1\cdot y^{\frac{\mu-1}{2}}& \text{if} & \mu \text{ is odd}\end{array}$$ with $a_0\neq 0$, as well as the corresponding moduli parameters of a normal form equation defined over an algebraic extension field of 
$\K$.}

  \State Apply transformation $x\mapsto ax$, $y\mapsto by$ over an algebraic extension field of $\K$ to $f$ to transform the weighted homogeneous part of $f$ to  $(x^2+y^3)^2$.
\State Let > be the local weighted degree reverse lexicographical ordering with weights $(3,2)$ and $x>y$.
\State Compute a standard basis $G$ of $\langle\frac{\partial f}{\partial x}, \frac{\partial f}{\partial y}\rangle$ with respect to $>$.
\State Compute $\mu$ the Milnor number of $f$, and set $d:=\mu-15$.
\State $a:=13$
\While{$a<12+d$}
\State $g:=$ weighted homogeneous part of $f$ of degree $a$
\State Write $g=l\cdot(x^2+y^3)$.
\State Construct automorphism $\varphi$ with $\varphi(x^2+y^3+\frac{1}{2}l)=x^2+y^3$ up to degree $\mu-1$.
\State $f:=\varphi(f)\ ,\ a:=a+1$
\EndWhile
\State $g:=$ weighted homogeneous part of $f$ of degree $12+d$
\If{$\mu$ is odd}
\State $m_0:=y^{\frac{\mu-3}{2}}\ ,\ m_1: = xy^{\frac{\mu-5}{2}}\ ,\ m_2: = y^{\frac{\mu-1}{2}}$
\Else
\State $\ m_0:=xy^{\frac{\mu-6}{2}}\ ,\ m_1: = y^{\frac{\mu-2}{2}}\ ,\ m_2: = xy^{\frac{\mu-4}{2}}$
\EndIf
\State Write $g=a_0\cdot m_0+l\cdot (x^2+y^3)$.
\State Construct automorphism $\varphi$ with $\varphi(x^2+y^3+\frac{1}{2} l)=x^2+y^3$ up to degree $\mu-1$.
\State $f:=\varphi(f)$
\State $g:=\text{ weighted homogeneous part of } \varphi \text{ of degree } 13+d$
\State Write $g=e\cdot m_1+l\cdot (x^2+y^3)$.
\State Construct automorphism $\varphi$ with $\varphi(x^2+y^3+\frac{1}{2}l)=x^2+y^3$ up to degree $\mu-1$.
\State $f:=\varphi(f)$
\If{$e\neq 0$}
\State $c:=(-1)^{\mu-1}\frac{e}{(\mu-3)a}$
\State Construct automorphism $\varphi$ defined by the vector field $c\cdot (3y^2\frac{\partial}{\partial x}- 2x\frac{\partial}{\partial y})$ up to degree $5$.
\State $f:=(x^2+y^3)^2+\varphi(f-(x^2+y^3)^2)$
\State $g:=\text{ the weighted homogeneous part of } f \text{ of degree } 14+d$
\State Write $g=a_1 \cdot m_2+l\cdot (x^2+y^3)$.
\EndIf
\Return ($\NF(W_{1,\mu-15}^\sharp$), $(a_0,a_1)$
)
\end{algorithmic}
\end{spacing}
\end{algorithm}

\begin{remark}
The approach described in Algorithm~\ref{alg:parameter} in case of a non-degenerate Newton boundary can be adapted to also handle the cases $W_{1,\mu-15}^\sharp$.  
However, this strategy requires more iterations than Algorithm~\ref{alg:ClassificationDeg}.
To adapt Algorithm~\ref{alg:parameter}, we remove lines~\ref{line branch deg} and~\ref{line branch deg2}, and in line \ref{call alg 4}, we call Algorithm~\ref{alg:parameterDeg} instead of Algorithm \ref{alg:Transformation} if $f$ is of type $W_{1,\mu-15}^\sharp$. 

Note that in these cases Algorithm \ref{alg:clas} does not require a field extension, hence, Algorithm~\ref{alg:parameterDeg} is called with input defined over $\Q$. Note also that Algorithm~\ref{alg:parameterDeg} is applicable with any choice of a system $B$ of the local algebra.

\end{remark}

\begin{algorithm}[ht]
\caption{Remove terms above the diagonal in cases with degenerate Newton boundary.}%
\label{alg:parameterDeg}
\begin{spacing}{1.05}
\begin{algorithmic}[1]

\Require{$f,f_0\in\Q [x,y]$, $t\in\Q [x,y]$ a term, and weights $u_1,u_2\in\mathbb Z^2$.}
\Ensure{$h\in\Q [x,y]$ such that $f\sim h$.}

\State $w:=w(f)$ and $j:=w\dash\deg(t)$
\State $g:=$ output of Algorithm \ref{alg:Transformation} with input $f$, $f_0$, $t$ and $u_1, u_2$
\State $B:=$ Arnold's  system of $\Q[x,y]/\Jac(f)$
\If{$t\in\Jac(f_0)$ or $g\neq f$ or ($g=f$ and $B$ contains an element of degree $j$) 
}
    \Return g
\EndIf
	\State $m:=$ monomial in $B$ of minimal $w$-degree
\State Factorize $f_0=\gamma \cdot g_0^2$ over $\Q$ with $\gamma \in \Q$ and $g_0\in \Q [x,y]$ linear.
	\State $\phi:=$ automorphism defined by $(\frac{\partial g_0}{\partial y}\frac{\partial}{\partial x}-\frac{\partial g_0}{\partial x}\frac{\partial}{\partial y})$ up to $w\dash$degree $5$
	\State $s:= \coeff(f,m)\cdot m$
\State $t':=w\dash\jet(\phi(s)-s,j)$
\For{\textbf{all} terms $\tilde t$ of $t'$ in increasing order by standard degree}
\State $t':=-f_0 +$ result of Algorithm~\ref{alg:Transformation} with input $t'+f_0$, $f_0$, $\tilde t$ and $u_1$, $u_2$\label{call alg 4a}
\State  $t':=w\dash\jet(t',j)$
\EndFor
\State $c:=-t/t'$
\State $\phi_c:=$ automorphism defined by $c\cdot (\frac{\partial g_0}{\partial y}\frac{\partial}{\partial x}-\frac{\partial g_0}{\partial x}\frac{\partial}{\partial y})$ up to $w\dash$degree $5$.
\State $h:=f_0 + \phi_c(f-f_0)$
\For{\textbf{all} terms $\tilde t$ of $h$ of $w$-degree $j$ in increasing order by standard degree}
\State $h:=$ result of Algorithm~\ref{alg:Transformation} with input $h$, $f_0$, $\tilde t$ and $u_1$, $u_2$\label{call alg 4b}
\EndFor
\Return $h$
\end{algorithmic}
\end{spacing}
\end{algorithm}

\begin{figure}[h]
\begin{center}
\begin{tabular}
[c]{ccc}%
{\includegraphics[
width=1.5in
]%
{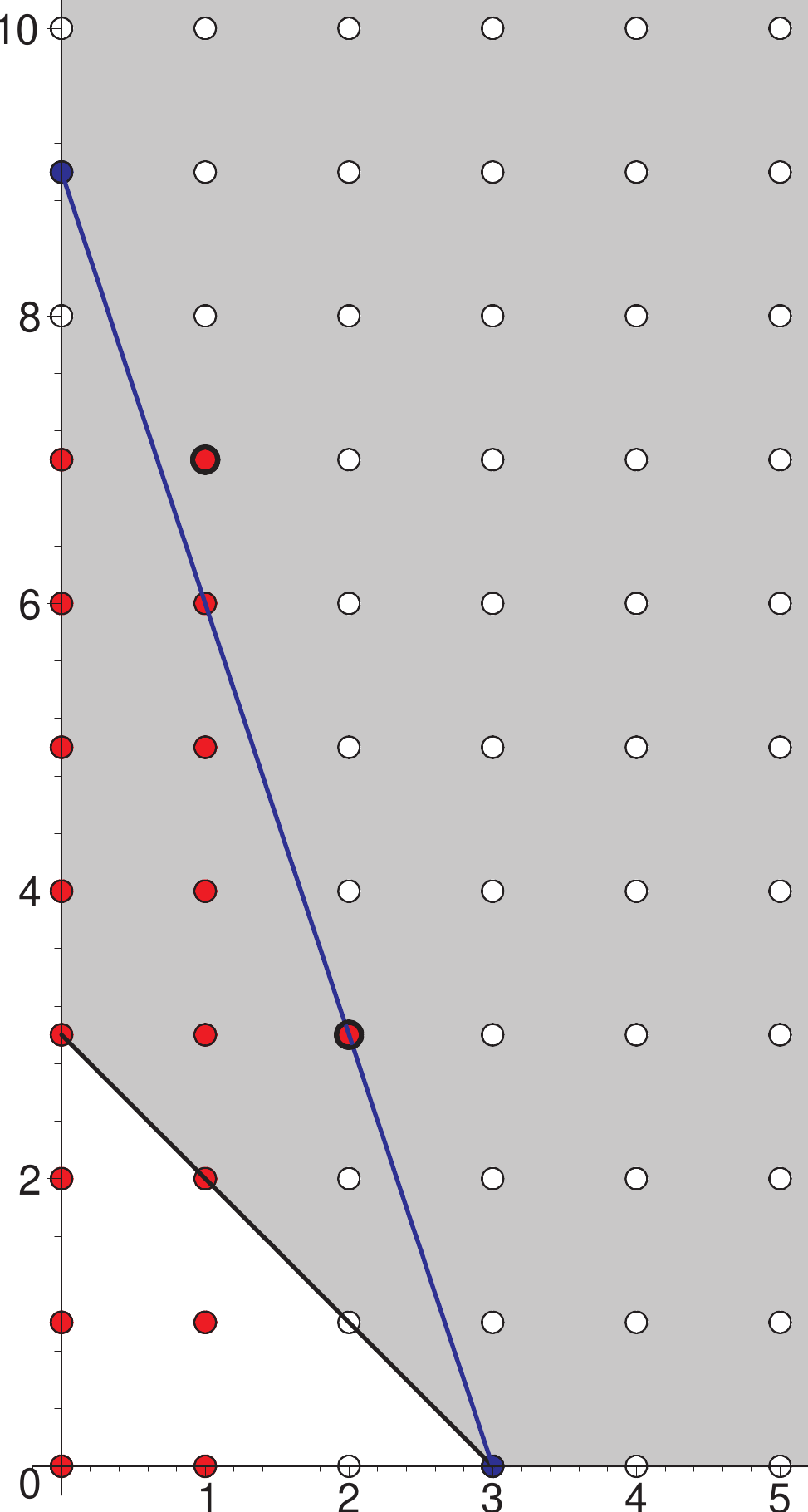}%
}%
&
{\includegraphics[
width=1.5in
]%
{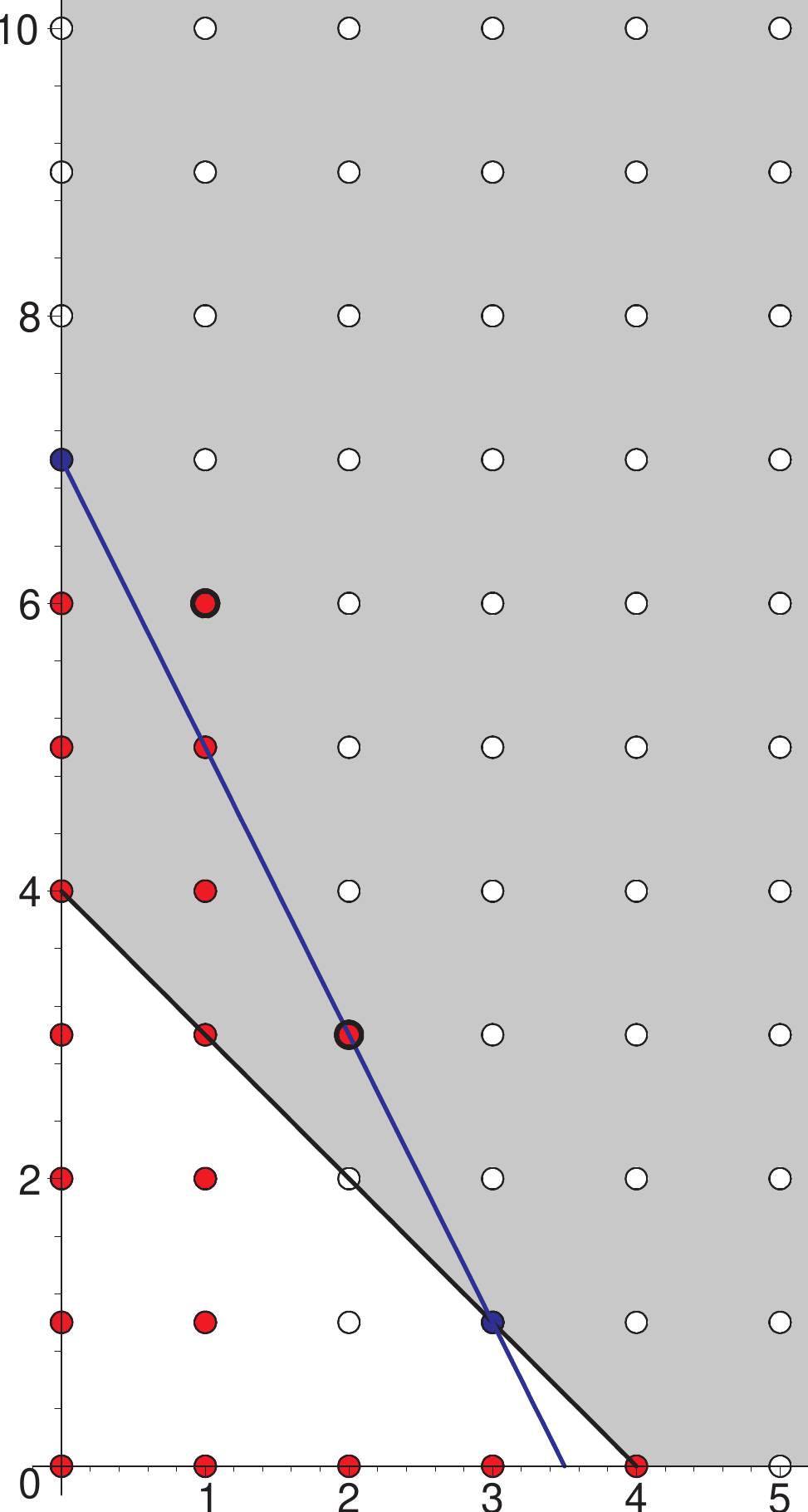}%
}%
&
{\includegraphics[
width=1.5in
]%
{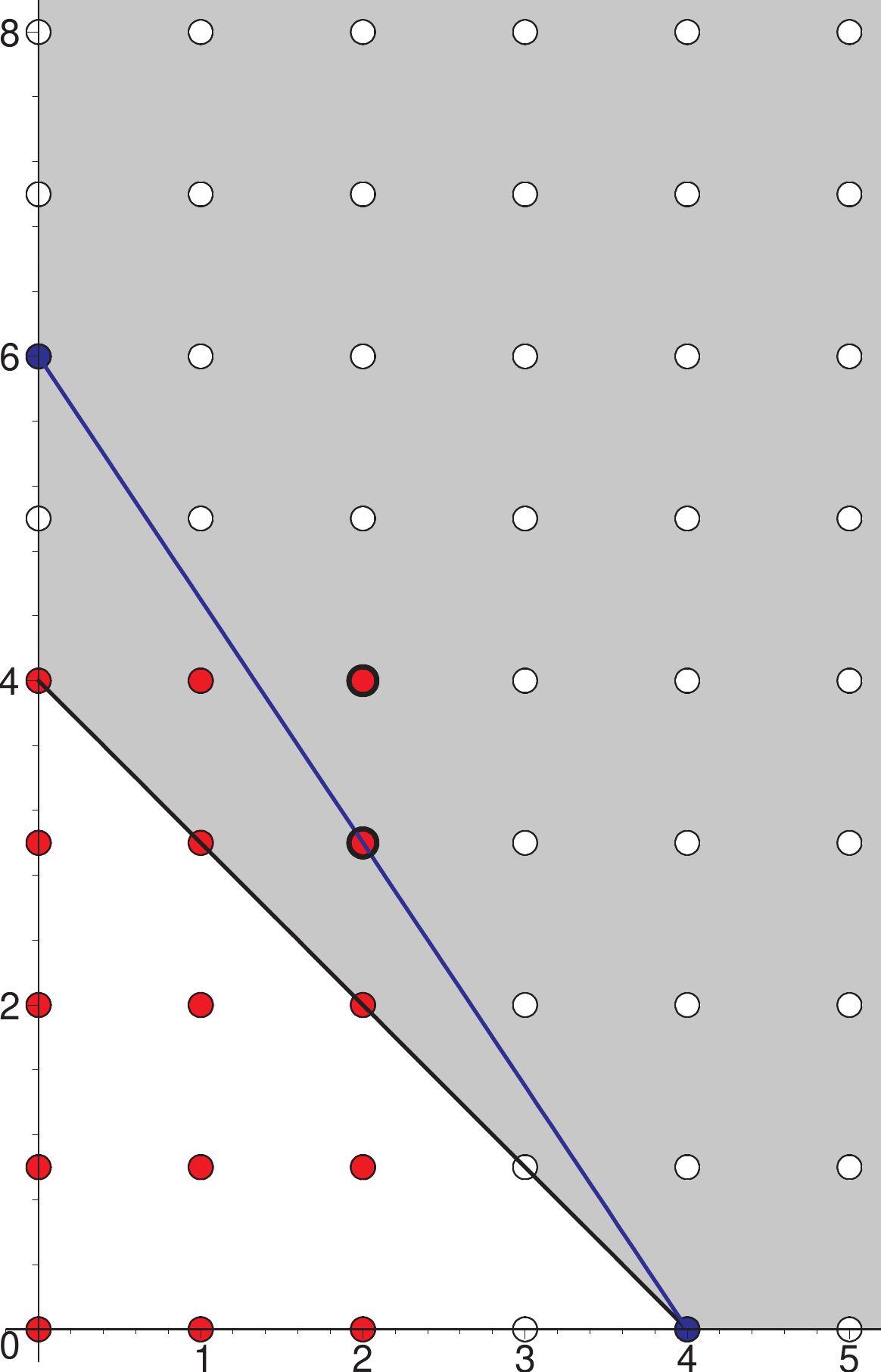}%
}%
\\
$J_{3,0}$ & $Z_{1,0}$ & $W_{1,0}$\\
&  & \\%
{\includegraphics[
width=1.5in
]%
{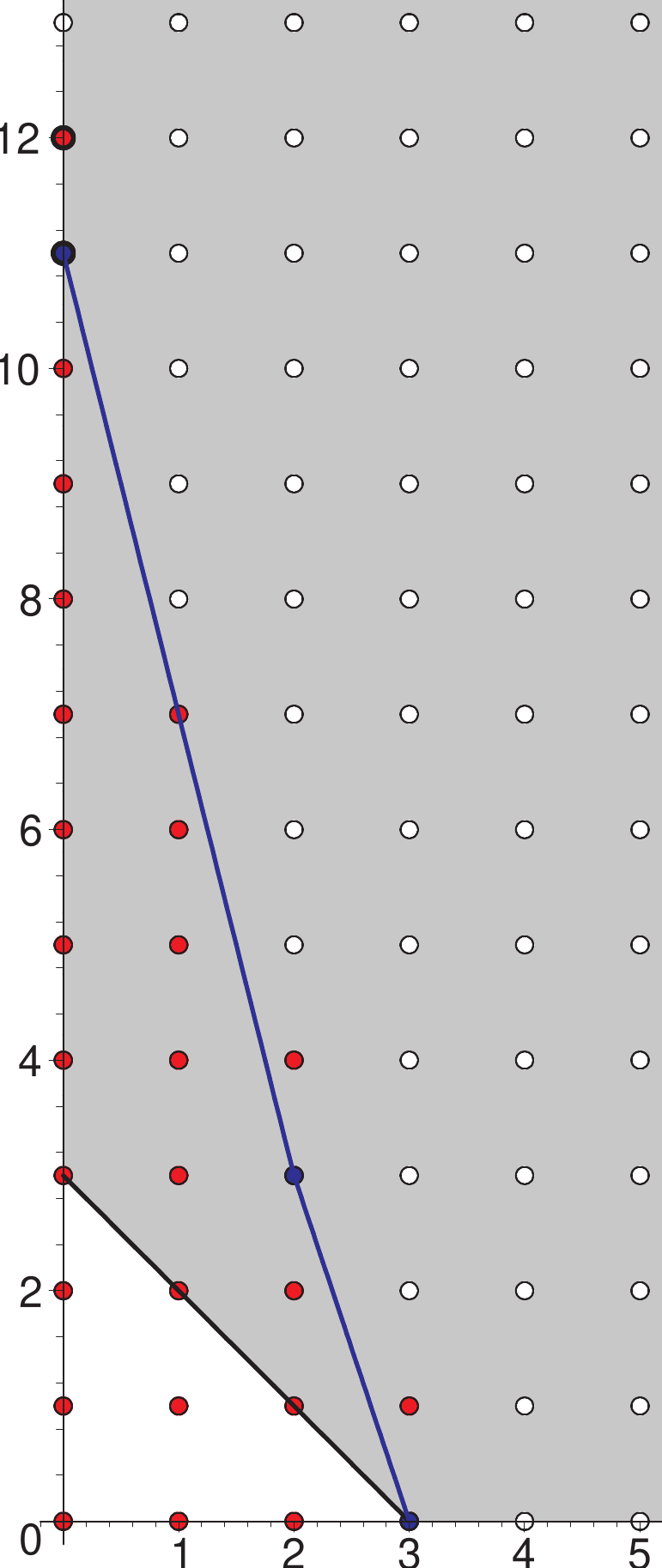}%
}%
&
{\includegraphics[
width=1.5in
]%
{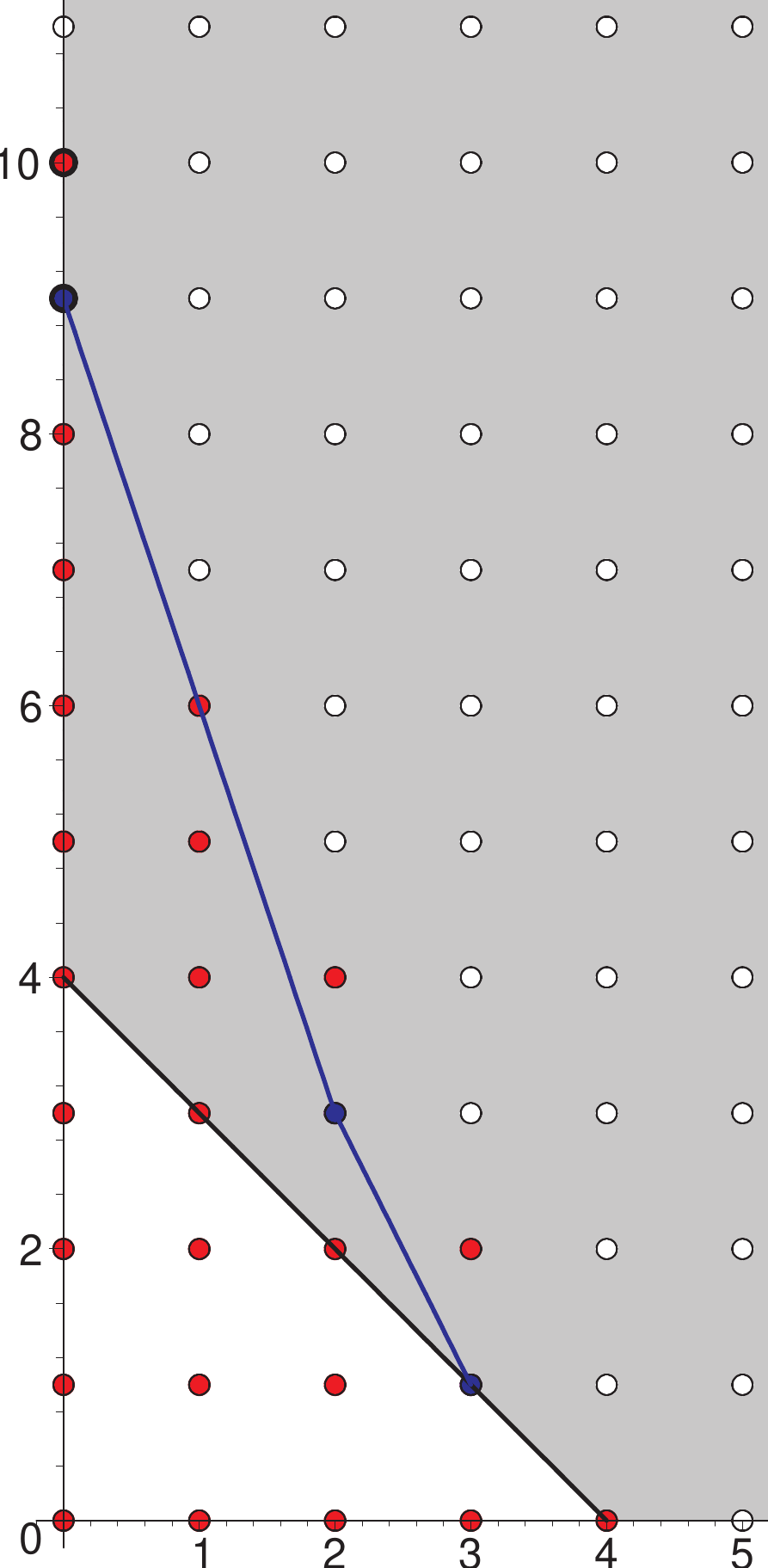}%
}%
&
{\includegraphics[
width=1.5in
]%
{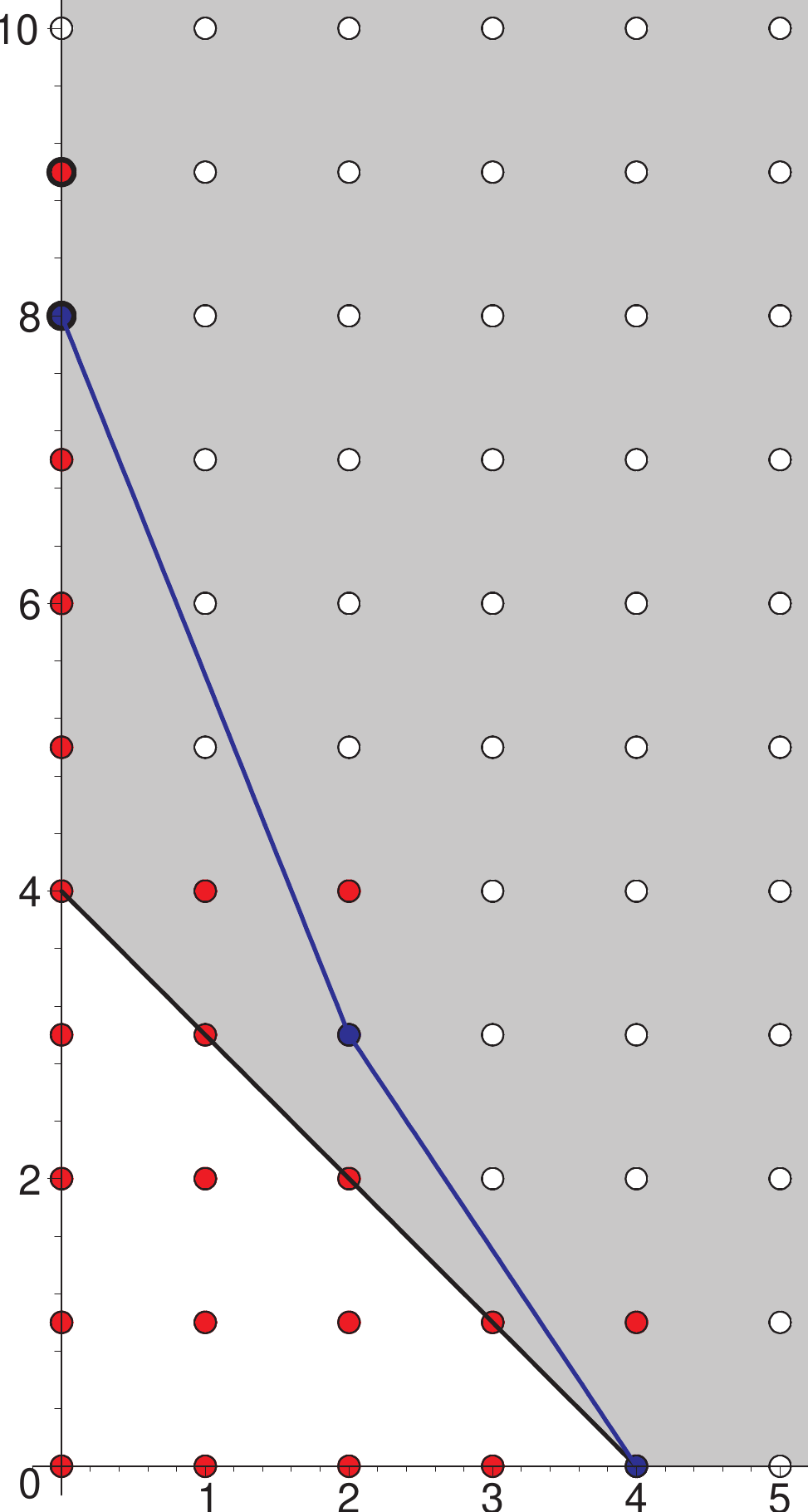}%
}%
\\
$J_{3,p}$ for $p=2$ & $Z_{1,p}$ for $p=2$ & $W_{1,p}$ for $p=2$%
\end{tabular}
\end{center}
\caption{Infinite series of bimodal corank $2$ singularities with non-degenerate Newton boundary.}%
\label{fig infinite}
\end{figure}

\begin{figure}
\begin{center}
\begin{tabular}
[c]{ccc}%
{\includegraphics[
width=1.5in
]%
{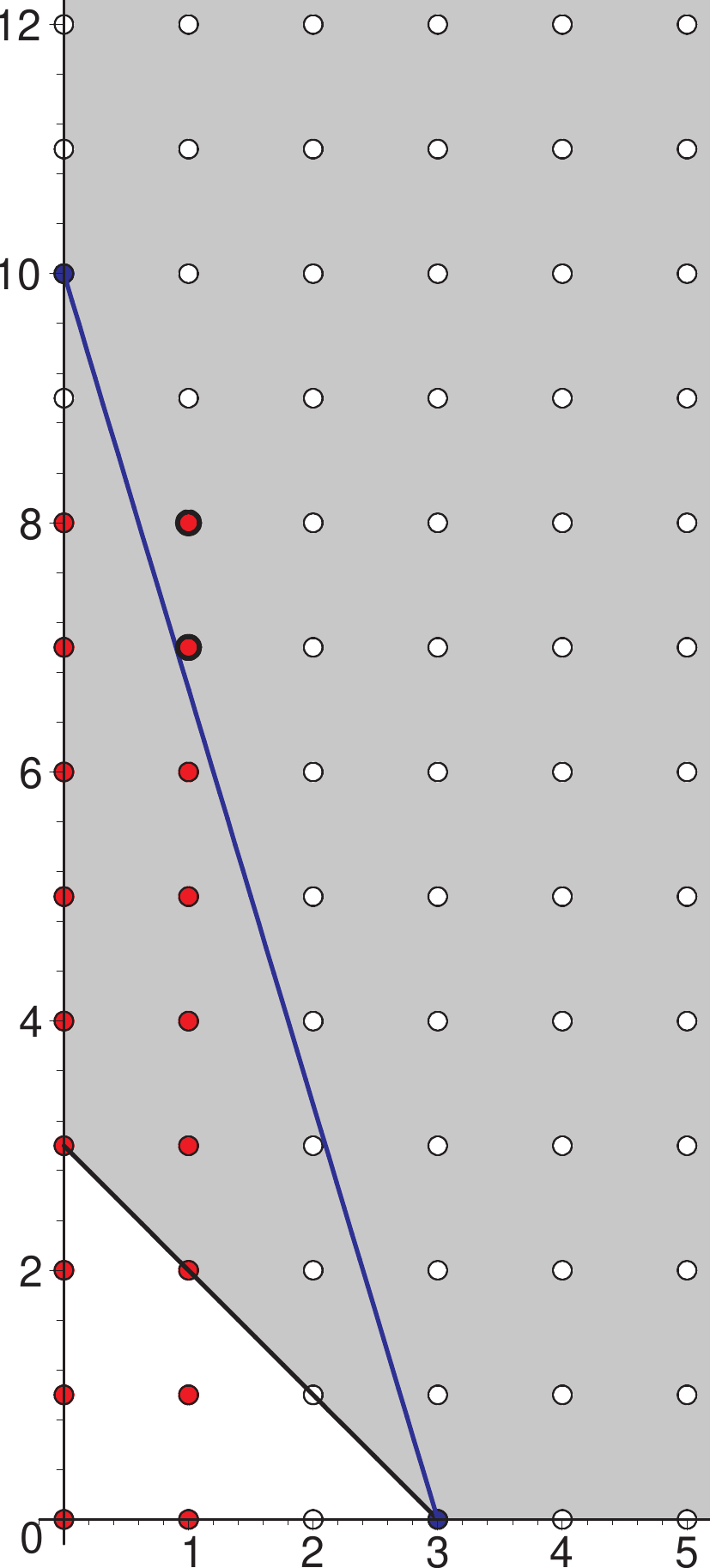}%
}%
&
{\includegraphics[
width=1.5in
]%
{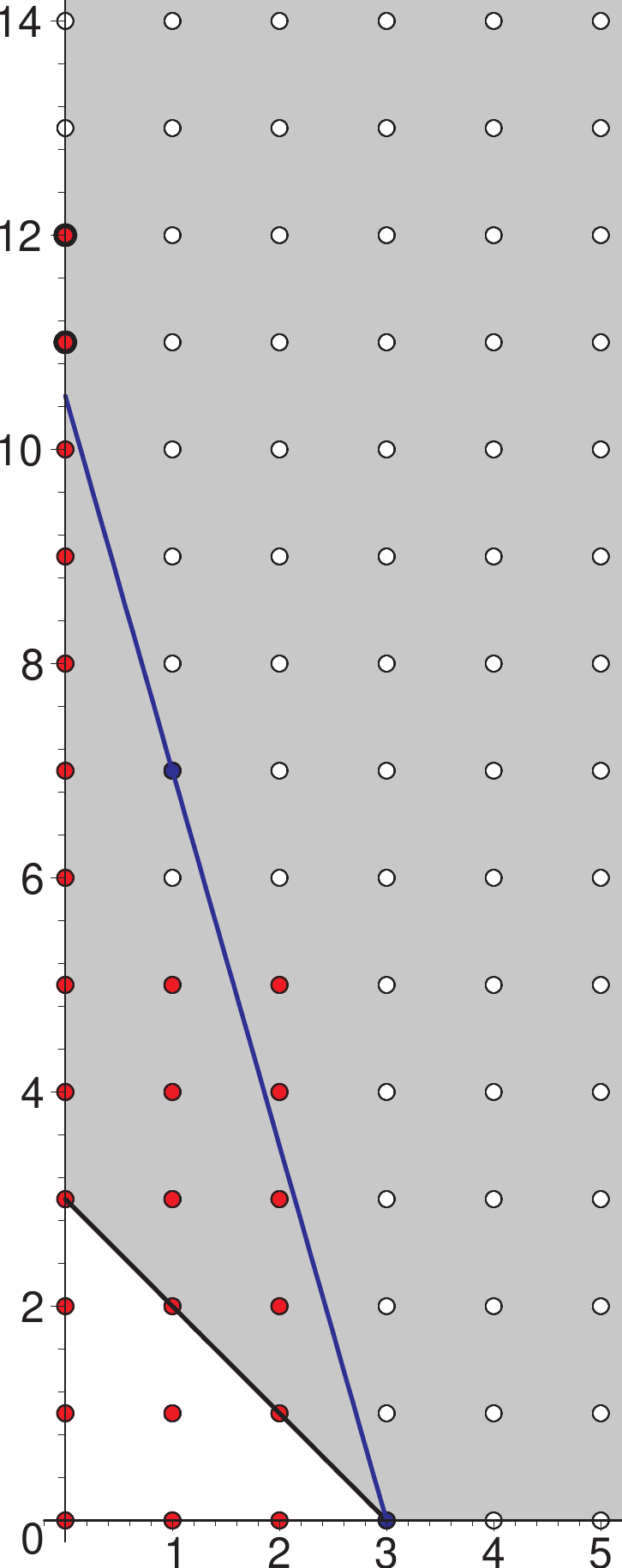}%
}%
&
{\includegraphics[
width=1.5in
]%
{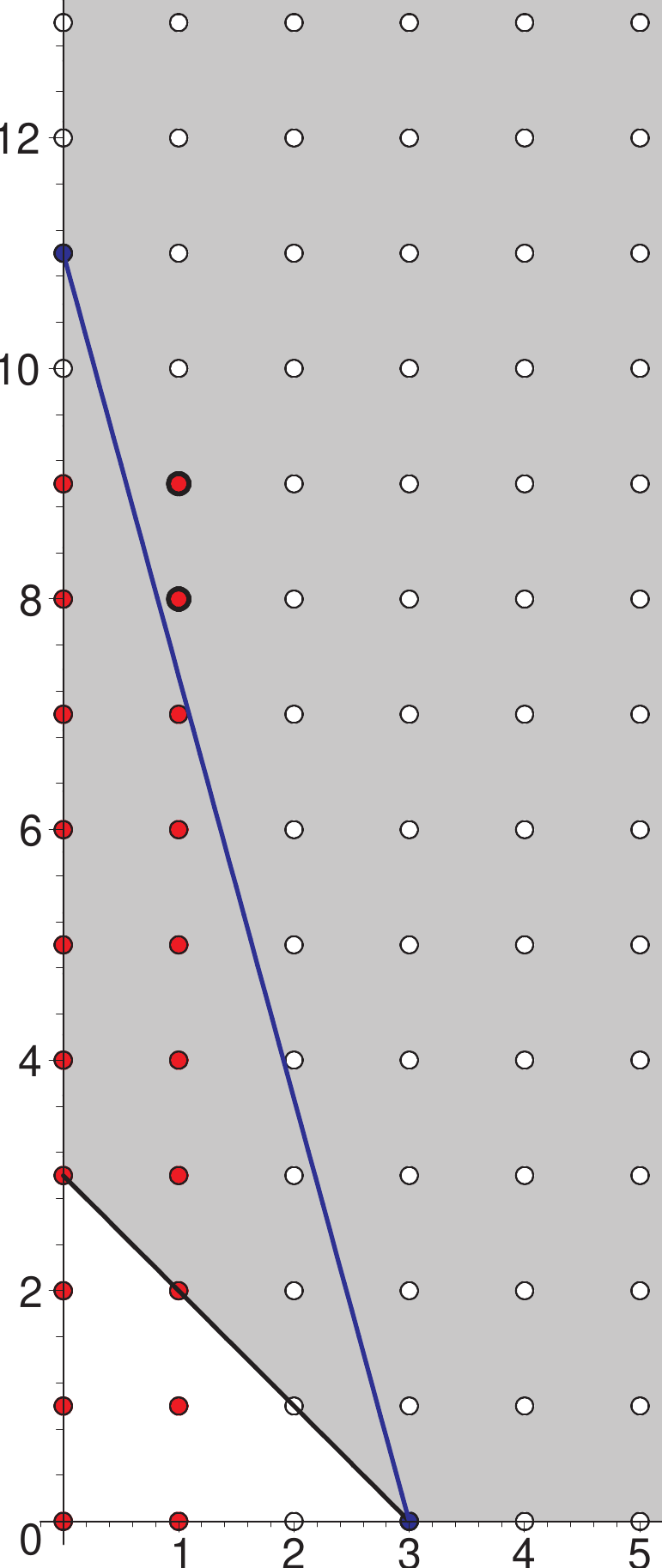}%
}%
\\
$E_{18}$ & $E_{19}$ & $E_{20}$%
\end{tabular}
\end{center}
\caption{Exceptional bimodal corank $2$ singularities of type E.}%
\label{fig E}
\end{figure}

\begin{figure}
\begin{center}
\begin{tabular}
[c]{ccc}%
{\includegraphics[
width=1.5in
]%
{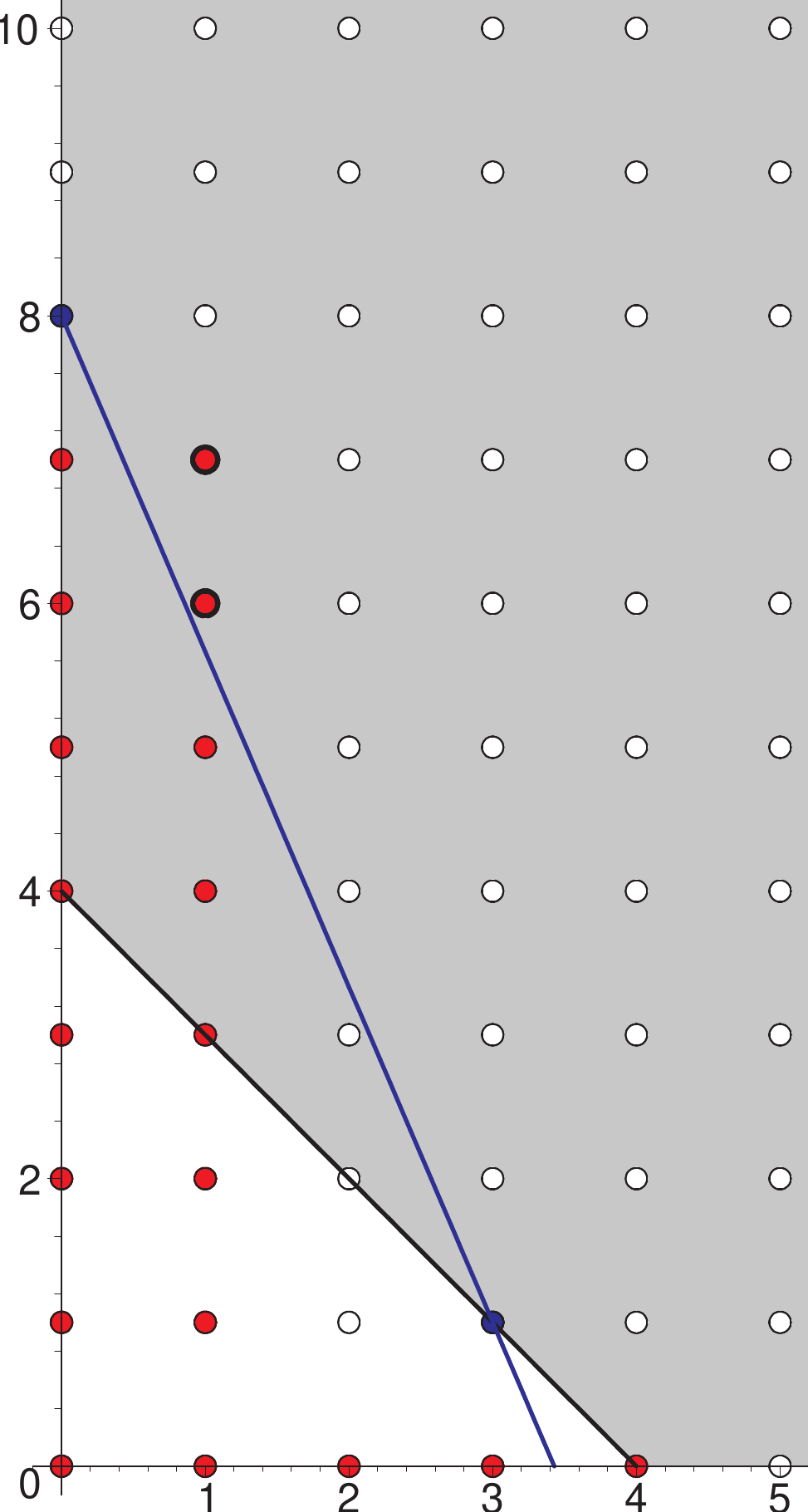}%
}
&
{\includegraphics[
width=1.5in
]%
{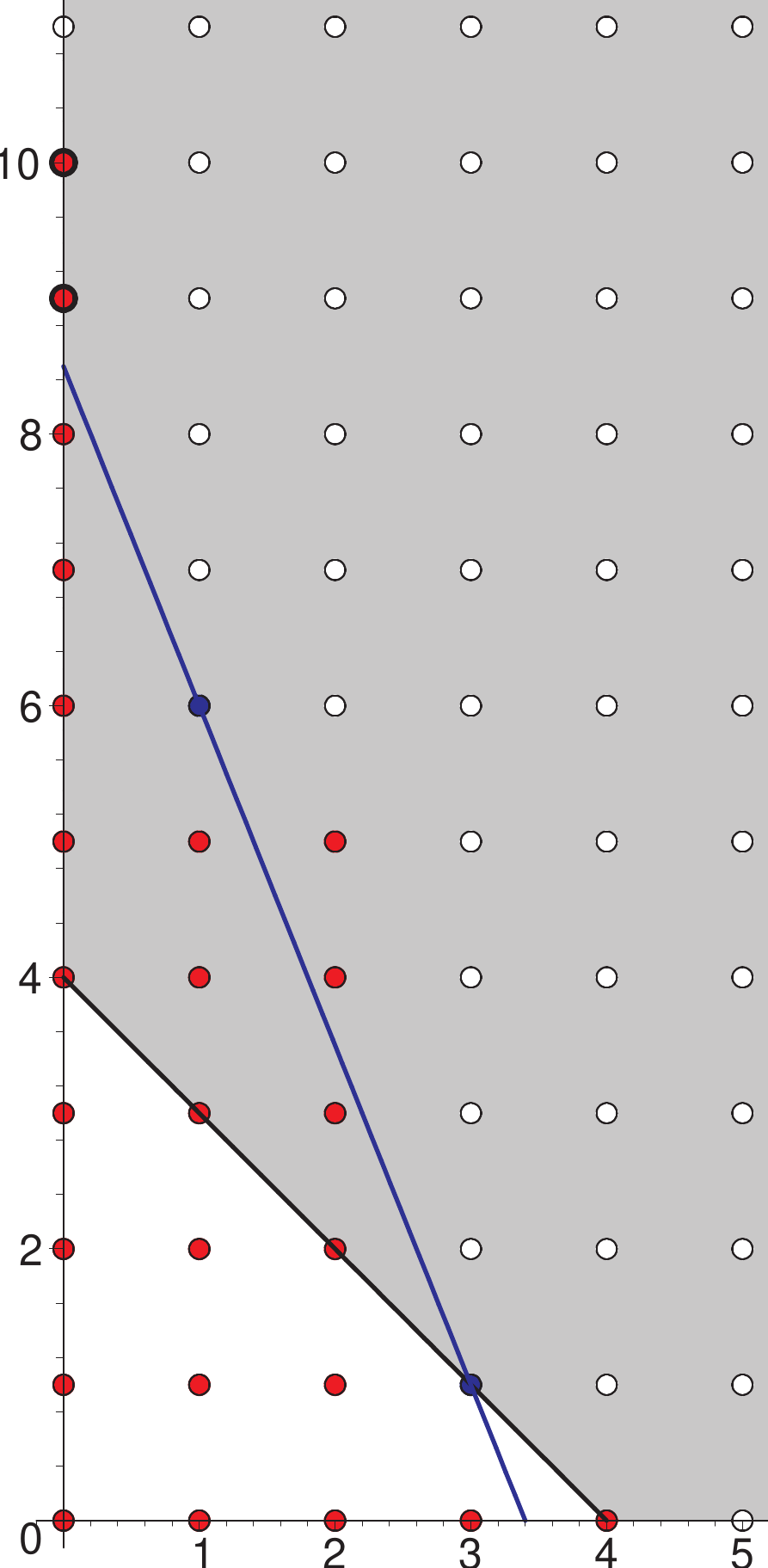}%
}
&
{\includegraphics[
width=1.5in
]%
{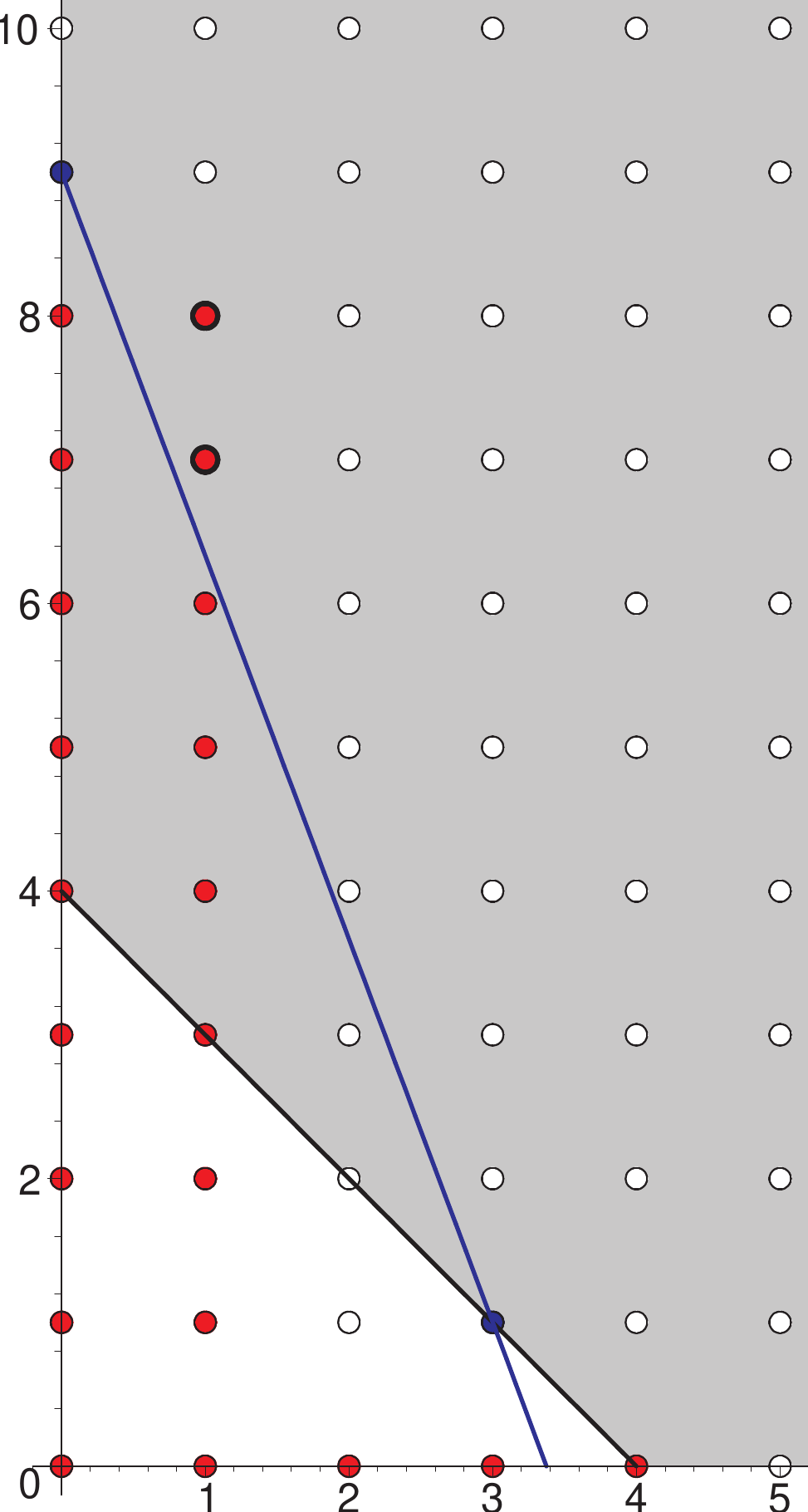}%
}
\\
$Z_{17}$ & $Z_{18}$ & $Z_{19}$%
\end{tabular}
\end{center}
\caption{Exceptional bimodal corank $2$ singularities of type Z.}%
\label{fig Z}
\end{figure}

\begin{figure}
\begin{center}
\begin{tabular}
[c]{cc}%
{\includegraphics[
width=1.5in
]%
{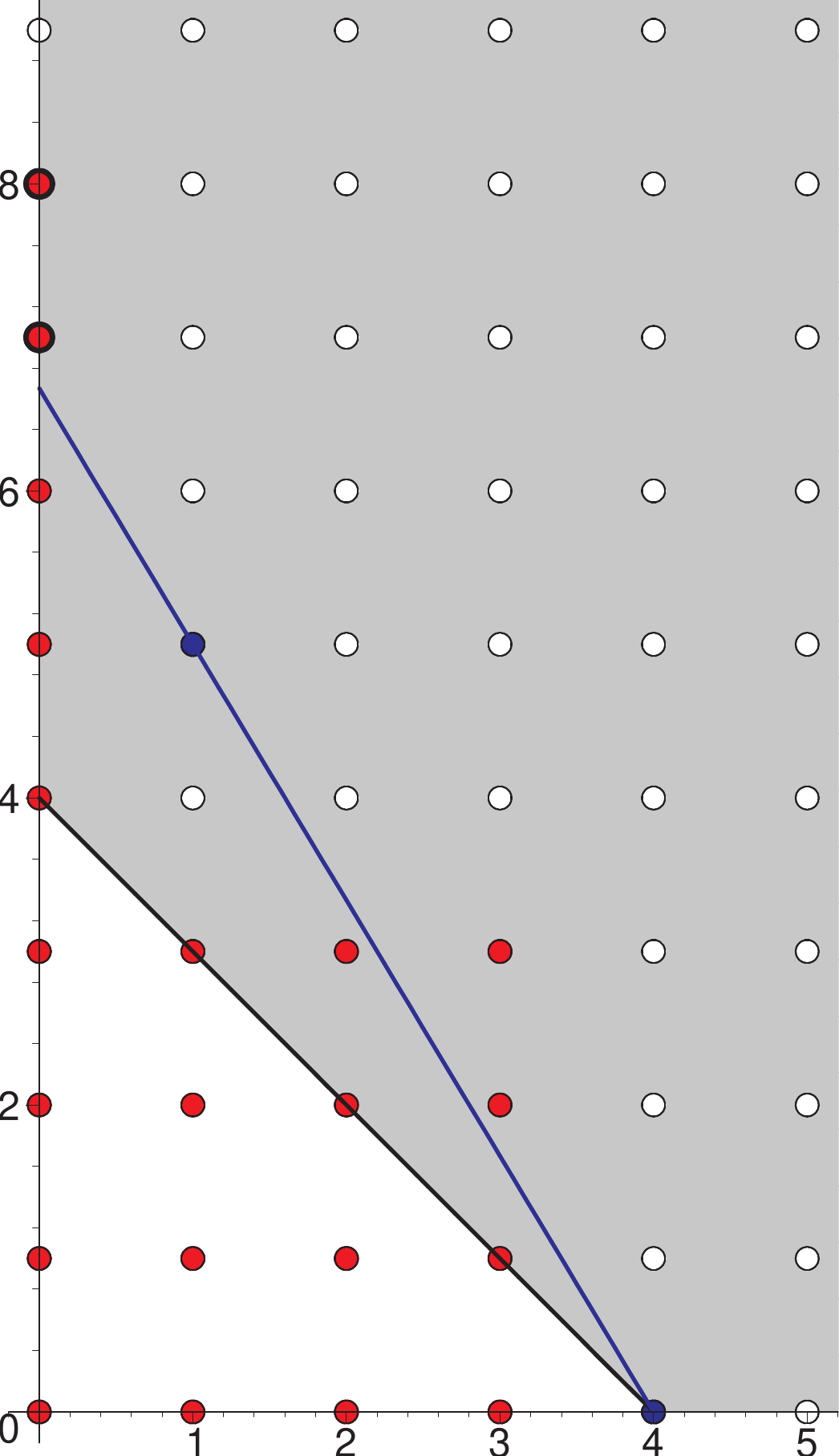}%
}
&
{\includegraphics[
width=1.67in
]%
{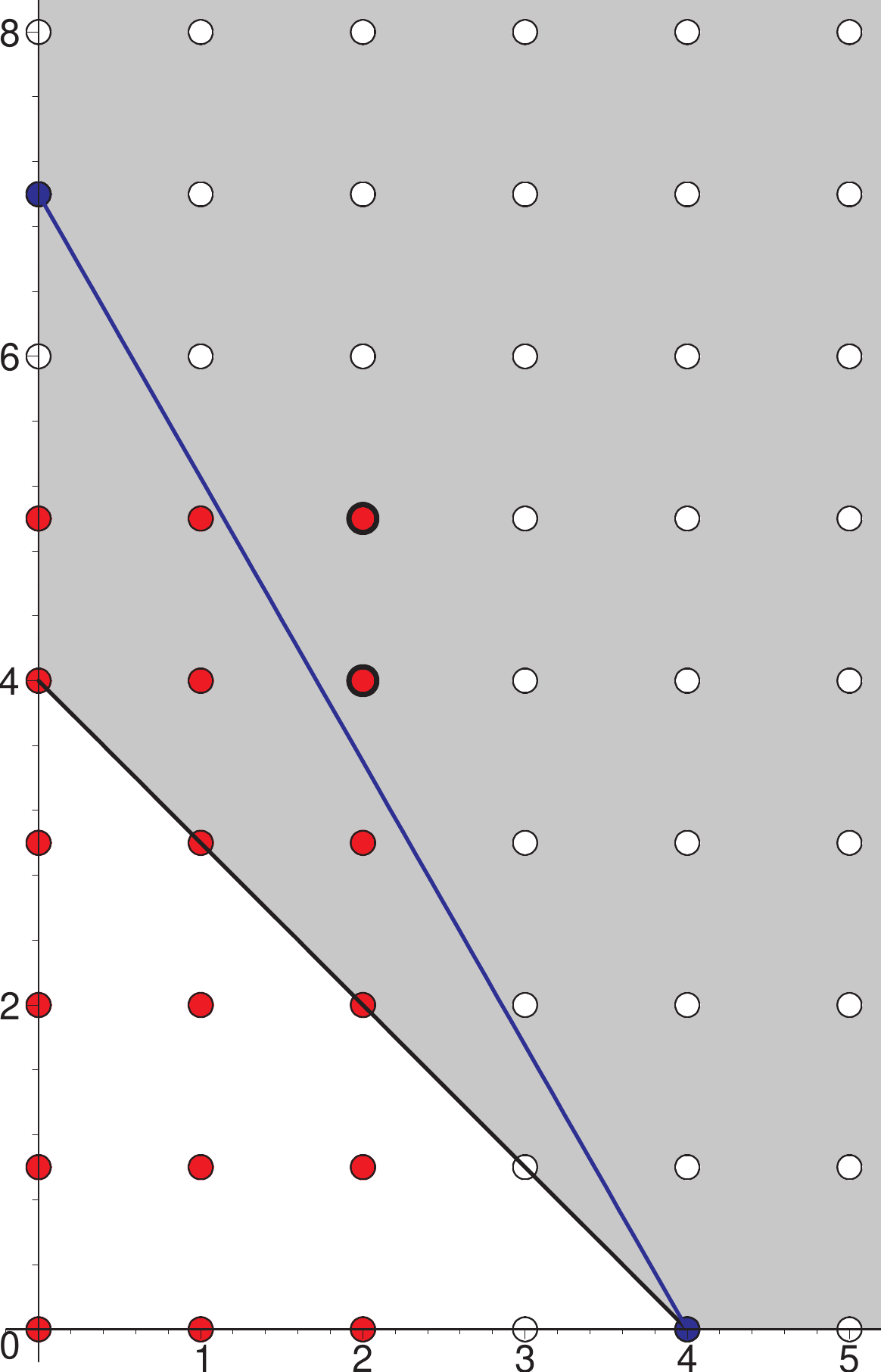}%
}
\\
$W_{17}$ & $W_{18}$%
\end{tabular}
\end{center}
\caption{Exceptional bimodal corank $2$ singularities of type W.}%
\label{fig W}
\end{figure}

\begin{figure}
\begin{center}
\begin{tabular}
[c]{cc}%
{\includegraphics[
width=2.11in
]%
{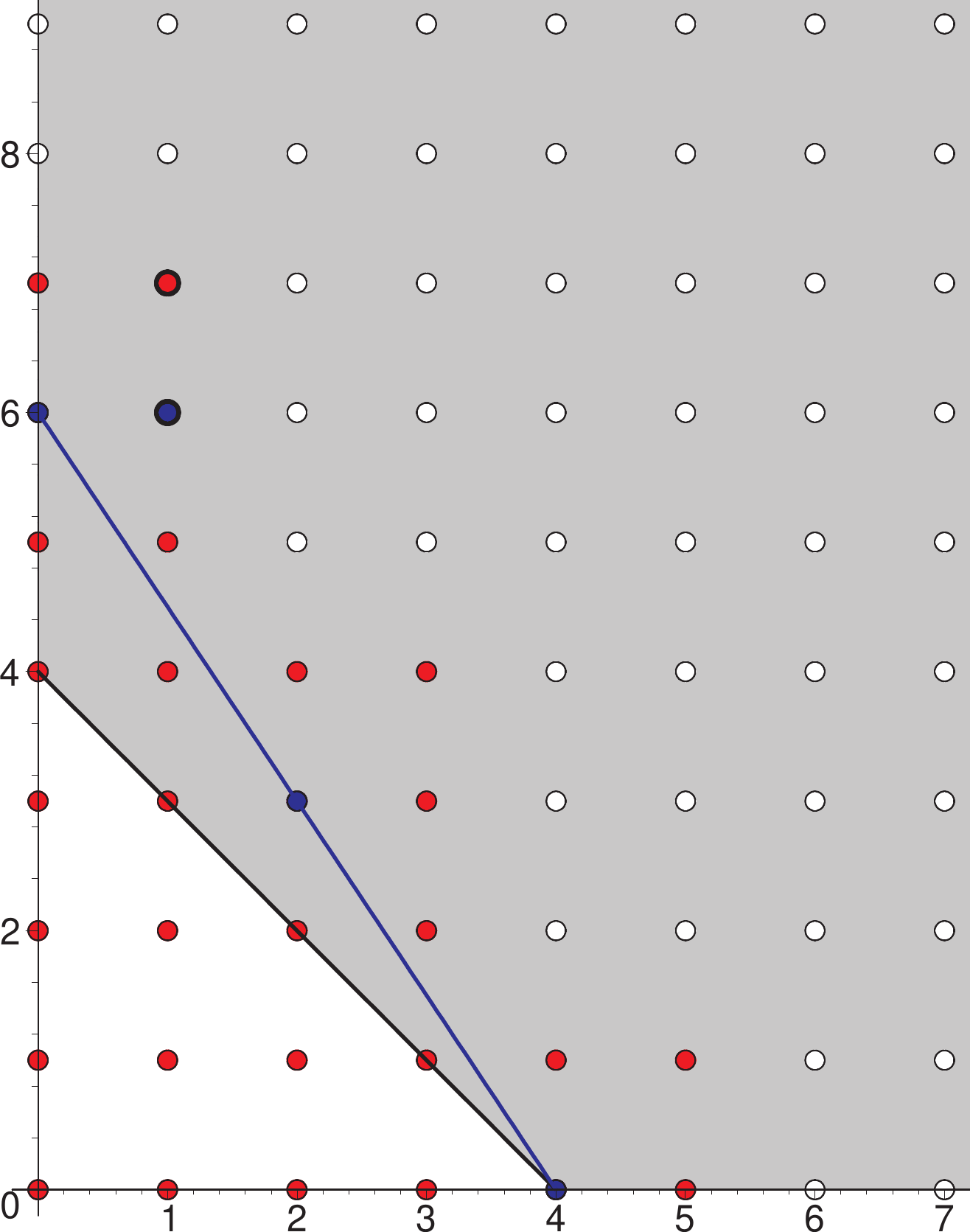}%
}
&
{\includegraphics[
width=2.4105in
]%
{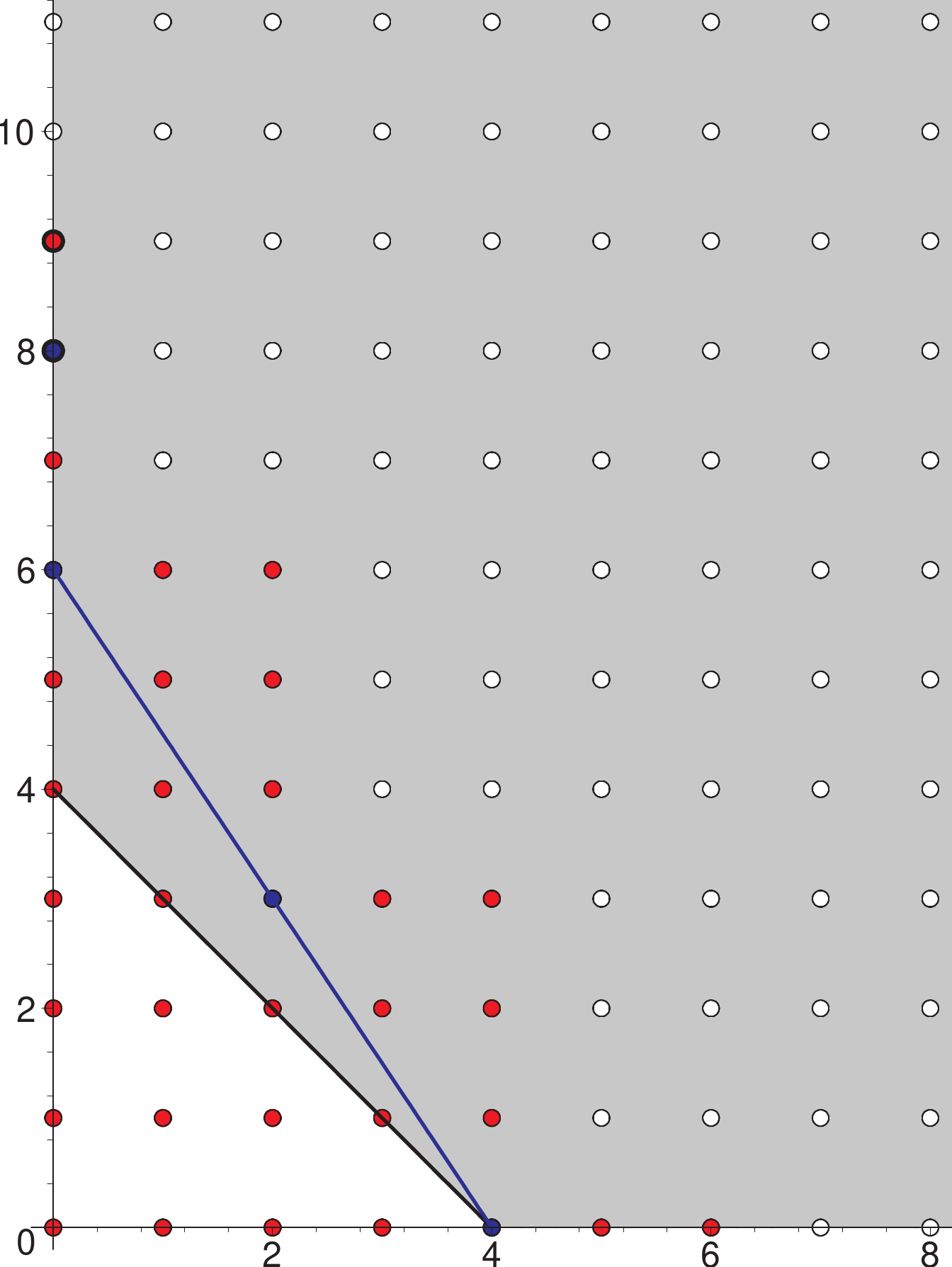}%
}
\\
$W_{1,2q-1}^{\sharp}$ for $q=2$ & $W_{1,2q}^{\sharp}$  for $q=2$%

\end{tabular}
\end{center}
\caption{Infinite series of bimodal corank $2$ singularities with degenerate Newton boundary.}%
\label{fig infinite deg}
\end{figure}

\end{document}